\newtheorem{theorem}{Theorem}
\numberwithin{theorem}{section}
\numberwithin{equation}{section}
\newtheorem{definition}[theorem]{Definition}
\newtheorem{lemma}[theorem]{Lemma}
\newtheorem{proposition}[theorem]{Proposition}
\newtheorem{corollary}[theorem]{Corollary}
\newcommand{\Z}{\mathbb{Z}}
\newcommand{\pZ}{\widehat{\Z}}
\newcommand{\Ic}{\mathfrak{I}}
\newcommand{\Jc}{\mathfrak{Y}}
\newcommand{\hM}{\widehat{M}}
\newcommand{\hG}{\widehat{\Gamma}}
\newcommand{\hN}{\widehat{N}}
\newcommand{\hD}{\widehat{\Delta}}
\newcommand{\hL}{\widehat{\Lambda}}
\newcommand{\Gs}[2][\Lambda]{\mathcal{G}_{#1}(#2)}
\newcommand{\Ann}[2][\Lambda]{\operatorname{Ann}_{#1}(#2)}
\newcommand{\at}[1]{|_{#1}}
\newcommand{\fp}{\mathfrak{p}}
\newcommand{\fm}{\mathfrak{I}}
\newcommand{\LI}{\Lambda_{\Ic}}
\newcommand{\ad}{_{\widehat{\Ic}}}
\title{Profinite Rigidity over Noetherian Domains}
\author{Julian Wykowski}
\address{Department of Pure Mathematics and Mathematical Statistics, Centre for Mathematical Sciences, Wilberforce Road, CB3 0WB, United Kingdom}
\email{jw2006@cam.ac.uk}
\date{\today}
\begin{document}
\begin{abstract}
    We initiate the study of profinite rigidity for modules over a Noetherian domain: to what extent are these objects determined by their finite images? We establish foundational statements in analogy to classical results in the category of groups. We describe three profinite invariants of modules over any Noetherian domain $\Lambda$. We show that free modules are profinitely rigid when $\Lambda$ satisfies a homological condition, and characterise the profinite genus of all modules when $\Lambda$ is a Dedekind domain. In the case where $\Lambda$ is a PID, we find that all finitely generated modules are profinitely rigid. As an application, we prove that solvable Baumslag--Solitar groups are profinitely rigid in the absolute sense. These are the first examples of absolute profinite rigidity among non-abelian one-relator groups and among non-LERF groups.
\end{abstract}
\maketitle

\section{Introduction}
Can the entirety of an infinite object be seen in its finite shadows? This question, known as \emph{profinite rigidity}, is a major theme in various mathematical theories. Group theory studies to what extent properties of a finitely generated residually finite group $\Gamma$ can be detected in the collection of its finite quotients, or equivalently, in its \emph{profinite completion} $\hG$, the latter being a topological group arising as the inverse limit of the finite quotients of $\Gamma$. The central problem of this enterprise is \emph{absolute profinite rigidity}, i.e. the question whether the isomorphism type of the profinite completion $\hG$ determines the isomorphism type of $\Gamma$ uniquely among all finitely generated residually finite groups. Despite considerable research (see \cite{Reid_Survey} or \cite{Bridson_survey} for a survey), the absolute question remains vastly unanswered; for free groups, it is a notorious open problem attributed to Remeslennikov \cite[Question 15]{Remeslennikov}. To the author's knowledge, the only currently known examples of absolute profinite rigidity are: abelian and certain virtually polycyclic groups \cite{Pickel1,Pickel2,Pickel3}, affine Coxeter groups \cite{Coxeter1,Coxeter2}, lamplighter groups whose base has prime exponent \cite{blacha}, and, thanks to a recent breakthrough, certain Kleinian groups \cite{Bridson_et_al, Bridson_et_al_2,Bridson_et_al_3,Cheetham-West}. The latter class are the only known examples among full-sized groups. In this article, we expand the list with an infinite family of solvable groups.
\begin{restatable}{theoremA}{thmB}\label{Thm::Groups}
    Let $n \in \mathbb{Z}$ be an integer. The solvable Baumslag--Solitar group ${\Gamma = \operatorname{BS}(1,n)} \cong \langle a,t \mid t^{-1}at = a^n\rangle $ is profinitely rigid in the absolute sense.
\end{restatable}
To the author's knowledge, Theorem~\ref{Thm::Groups} yields the first examples of absolute profinite rigidity among non-abelian one-relator groups, as well as the first known examples among groups which are not subgroup separable (LERF). We note also that Theorem~\ref{Thm::Groups} contrasts with \cite{Pickel_Metabelian}, where infinite families of non-isomorphic metabelian groups with isomorphic profinite completions were constructed, and \cite{Nikolov_Segal_2}, where uncountable such families of solvable groups were found.

The strategy of proof for profinite rigidity results typically involves the reduction of statements about the finite quotients of a group $\Gamma$ to statements about the finite quotients of certain modules arising as subgroups or representation of $\Gamma$. However, so far, these arguments have been constrained to the specific groups under consideration, due to the absence of a general theory of profinite rigidity for modules over rings $\Lambda \neq \mathbb{Z}$. In this article, we initiate the research of profinite rigidity in the generality of modules over \emph{any} Noetherian domain. This approach paves the way for a more structured enterprise of profinite rigidity research, yielding applications such as Theorem~\ref{Thm::Groups} and further work of the author published in parallel \cite{FreeMetab}.

Given a ring $\Lambda$ and a $\Lambda$-module $M$, the \emph{profinite completion} of $M$ in the category of $\Lambda$-modules is the topological $\Lambda$-module $\hM$ given by the inverse limit of the finite $\Lambda$-epimorphic images of $M$ endowed with the profinite topology, i.e. the topology induced from the product of discrete topologies on all finite quotients. We shall restrict ourselves to the study of \emph{residually finite} $\Lambda$-modules---that is, modules $M$ such that the canonical map $\iota \colon M \to \hM$ is an injection---since any profinite property of a $\Lambda$-module $M$ factors through its maximal residually finite quotient $M/\operatorname{Ker}(\iota)$. We aim to investigate when the following definition is satisfied.
\begin{definition}\label{Def::APR}
     Let $\Lambda$ be a commutative ring. A finitely generated residually finite $\Lambda$-module $M$ is \emph{$\Lambda$-profinitely rigid in the absolute sense} if the equivalence \[
     \widehat{M} \cong \widehat{N} \Longleftrightarrow M \cong N
     \]
    holds for any finitely generated residually finite $\Lambda$-module $N$.
\end{definition}

 Similarly, we say that a property, quantity or a functor $\mathcal{P}(-)$ of $\Lambda$-modules is a \emph{$\Lambda$-profinite invariant} if 
 $
 \hM \cong \hN
 $
 implies
 $\mathcal{P}(M) = \mathcal{P}(N)
 $ whenever $M,N$ are finitely generated residually finite $\Lambda$-modules. In the absence of absolute profinite rigidity, one may instead ask to exhibit certain $\Lambda$-profinite invariants or determine the set of isomorphism classes of $\Lambda$-modules whose profinite completion is isomorphic to that of a given $\Lambda$-module $M$, i.e. the \emph{profinite genus} of $M$.

 \begin{definition}
Let $\Lambda$ be a commutative ring and $M$ be a finitely generated residually finite $\Lambda$-module. The \emph{$\Lambda$-profinite genus} $\Gs{M}$ of $M$ is the set of isomorphism classes of finitely generated residually finite $\Lambda$-modules $N$ admitting an isomorphism of profinite completions $\hM \cong \hN$.     
 \end{definition}
 
 The focus of the present article is to establish various $\Lambda$-profinite invariants in the generality where $\Lambda$ is any Noetherian domain, and absolute $\Lambda$-profinite rigidity in the presence of further assumptions on $\Lambda$. To begin, we assume only that $\Lambda$ is finitely generated as a $\Z$-algebra; in that setting, all finitely generated modules are residually finite (see Lemma~\ref{Lem::NB}). Nonetheless, the diversity of modules over Noetherian domains endows the study of profinite rigidity in these categories with complexity and consequence. In Section~\ref{Sec::Gen}, we estblish a series of results linking the profinite completion of a $\Lambda$-module to its adic completions, allowing us to pass between local and global methods in certain contexts. In Section~\ref{Sec::PI}, we investigate to what extent a finitely generated $\Lambda$-module $M$ is determined by its $\Lambda$-profinite completion $\hM$. For instance, we describe contexts in which the profinite annihilator $\Ann[\hL]{\hM}$ coincides with the closure of the discrete annihilator $\overline{\Ann[\Lambda]{M}}$ in the profinite ring $\hL$: see Proposition~\ref{Prop::AnnAndComp}, an essential ingredient in the proof of Theorem~\ref{Thm::Groups}. In greater generality, this seemingly subtle distinction turns out intimately related to the study of integral closure for ideals in regular Noetherian rings, itself an active area of research in commutative algebra and complex geometry \cite{Sznajdman}.
 
 Going further, we establish three $\Lambda$-profinite invariants for modules over any finitely generated Noetherian domain $\Lambda$, culminating in the following result.
\begin{restatable}{propositionA}{propA}\label{Prop::General}
        Let $\Lambda$ be a finitely generated Noetherian domain and $M,N$ be finitely generated $\Lambda$-modules with $\hM \cong \hN$. Then:
    \begin{enumerate}
        \item The annihilators of $M$ and $N$ in $\Lambda$ agree: $\Ann{M} = \Ann{N}$.
        \item For any maximal ideal $\fm$, the minimal number of generators of the localisations $M_\fm$ and $N_\fm$ as $\Lambda_\Ic$-modules agree: $\mu_\fm(M) = \mu_\fm(N)$.
        \item The $\Lambda$-module $M$ is projective of rank $\nu \in \mathbb{N}$ if and only if the $\Lambda$-module $N$ is projective of rank $\nu$.
    \end{enumerate}
\end{restatable}

We say that a commutative ring $\Lambda$ is \emph{homologically taut} if finitely generated projective modules are free. In that case, Proposition~\ref{Prop::General} yields the following profinite rigidity result for free modules over a maximally general class of domains.

\begin{restatable}{theoremA}{thmA}\label{Thm::epic}
    Let $\Lambda$ be a homologically taut finitely generated Noetherian domain. Finitely generated free $\Lambda$-modules are $\Lambda$-profinitely rigid in the absolute sense.
\end{restatable}
In the opposite direction, one may ask to what extent profinite rigidity holds for modules over Noetherian domains $\Lambda$ where projectivity is equilvalent to absence of torsion. This property characterises the class of \emph{Dedekind domains}, that is, integrally closed Noetherian domains of dimension one. In that context, a strong structure theorem for finitely generated $\Lambda$-modules in terms of the ideal classes of $\Lambda$ allows us to determine precisely the obstruction to profinite rigidity for finitely generated $\Lambda$-modules. We dedicate Section~\ref{Sec::PIDs} to the proof of the following result.

\begin{restatable}{theoremA}{thmC}\label{Thm::Dedekind}
Let $\Lambda$ be a finitely generated Dedekind domain. For any finitely generated $\Lambda$-module $M$, there exists an injective assignment of sets \[
\mathcal{Y}_{\Lambda,M} \colon \Gs{M} \hookrightarrow \operatorname{Cl}(\Lambda)
\] mapping the $\Lambda$-profinite genus of $M$ into the ideal class group of $\Lambda$.
\end{restatable}

Note that Dedekind domains with trivial ideal class group are precisely principal ideal domains (PIDs). In that case, we obtain profinite rigidity in the maximal generality of \emph{all} finitely generated $\Lambda$-modules.

\begin{restatable}{corollaryA}{corB}\label{Cor::PID}
     Let $\Lambda$ be a finitely generated principal ideal domain. Any finitely generated $\Lambda$-module is $\Lambda$-profinitely rigid in the absolute sense.
\end{restatable}

Finally, in Section~\ref{Sec::Gps}, we develop applications to the profinite rigidity of groups. The strategy is to reduce the question of profinite rigidity of a finitely generated group $\Gamma$ to the question of profinite rigidity for a suitably chosen modular represtentation of $\Gamma$. This strategy is converse to \cite{Pickel_Metabelian}, where non-isomorphic modules with isomorphic finite quotients were used to produce counterexamples to profinite rigidity among metabelian groups. The novelty in the present case lies in the leveraging of the full theory of profinite modules together with modern algebraic machinery to obtain the positive profinite rigidity result in Theorem~\ref{Thm::Groups}. Specifically, we utilise the results developed in Sections \ref{Sec::Gen}, \ref{Sec::PI} and \ref{Sec::PIDs} in conjunction with a celebrated result of Quillen and Suslin \cite{quillen,suslin} on projective modules over polynomial rings, to reduce the proof of Theorem~\ref{Thm::Groups} to a number-theoretic statement. The latter turns out equivalent to a question of Erd\H{o}s which has been resolved in \cite{corrales_schoof}.

In a parallel article \cite{FreeMetab}, the author has also applied the results of the present paper to prove that finitely generated free metabelian groups are profinitely rigid in the absolute sense. Ongoing investigations into the profinite rigidity of additional classes of solvable groups---in particular, torsion-free lamplighter groups---are due to appear in future work.

\section*{Acknowledgements}
The author is grateful to his PhD supervisor, Gareth Wilkes, for enlightening discussions and advice throughout the project. Thanks are also due to Will Cohen, Francesco Fournier-Facio, Tim Santens, Henrique Souza and Henry Wilton for helpful conversations and acquaintance with various references. Financially, the author was supported by a Cambridge International Trust \& King's College Scholarship.

\section{General Theory}\label{Sec::Gen}
Let us begin with the establishment of a few foundational results regarding profinite completions of modules over a commutative ring $\Lambda$ in analogy to classical statements regarding profinite completions and the profinite rigidity of groups. We invite the reader to \cite[Section 5.5]{RZ} and \cite[Section 6.1]{Gareth_Book} for the necessary background regarding profinite completions in categories of groups, rings and modules. We shall also relate the theory of profinite modules to the theory of adic completions; for an introduction on the latter, see \cite[Chapter III]{bourbaki}.

\subsection{Foundational results}

Let $\Lambda$ be a commutative ring and $M$ be a $\Lambda$-module. We say that the ring $\Lambda$ is \emph{finitely generated} if there exists an epimorphism of $\Z$-algebras $\Z[x_1, \ldots, x_d] \twoheadrightarrow \Lambda$ for some $d \in \mathbb{N}$. Similarly, we say that the $\Lambda$-module $M$ is \emph{finitely generated} if there exists an epimorphism of $\Lambda$-modules $\Lambda^\mu \twoheadrightarrow M$ for some $\mu \in \mathbb{N}$; in that case, we write $\mu(M)$ to denote the minimal number $\mu$ which admits such an epimorphism, i.e. the minimal number of generators of $M$. We shall say that $M$ is \emph{finite} if its underlying set is finite---this aligns with group theoretic terminology but should not be confused with some terminology in commutative algebra where a finite module can mean what we define as finitely generated. Finally, we say that a $\Lambda$-module $M$ is \emph{residually finite} if for any $m \in M - 0$ there exists an epimorphism of $\Lambda$-modules $\pi \colon M \to Q$ onto a finite $\Lambda$-module $Q$ such that $\pi(m) \neq 0$ holds. Note that finitely generated commutative rings are automatically Noetherian by the Hilbert Basis Theorem. Nonetheless, we shall continue referring to these commutative rings as \emph{finitely generated Noetherian} rings to highlight the indispensability of the Noetherian property. However, thanks to the following result, we may omit the assumption of residual finiteness for finitely generated modules over finitely generated Noetherian rings.
\begin{lemma}[Theorem 1 in \cite{Nut_Berry}] \label{Lem::NB}
    Let $\Lambda$ be a finitely generated Noetherian domain. Any finitely generated $\Lambda$-module is residually finite. 
\end{lemma}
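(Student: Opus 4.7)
The plan is to reduce the statement to two classical structural facts: the Nullstellensatz-type result that a finitely generated $\Z$-algebra is a Jacobson ring whose maximal ideals have finite residue fields, and Krull's intersection theorem applied locally. Concretely, I would first record that for any finitely generated $\Z$-algebra $\Lambda$ and any maximal ideal $\fm \subset \Lambda$, the quotient $\Lambda/\fm$ is a finite field. This is the arithmetic Nullstellensatz: by Noether normalisation $\Lambda/\fm$ is a field which is finitely generated as a $\Z$-algebra, and any such field is necessarily a finite extension of a finite prime field $\mathbb{F}_p$. In particular every $\Lambda/\fm^n$ is then finite, since it is filtered by the finitely many $\fm^i/\fm^{i+1}$, each a finitely generated module over the finite field $\Lambda/\fm$.

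Now let $M$ be a finitely generated $\Lambda$-module and $0 \neq m \in M$. The annihilator $\Ann{m}$ is a proper ideal, so I choose a maximal ideal $\fm \supseteq \Ann{m}$; by construction no element of $\Lambda \setminus \fm$ annihilates $m$, so the image of $m$ in the localisation $M_\fm$ is nonzero. Since $\Lambda_\fm$ is a Noetherian local ring and $M_\fm$ is finitely generated over it, Krull's intersection theorem gives $\bigcap_{n \geq 1} \fm^n M_\fm = 0$, whence there is some $n \geq 1$ with $m \notin \fm^n M_\fm$. Because $\Lambda/\fm^n$ is already $\fm$-local, the canonical map $M/\fm^n M \to M_\fm/\fm^n M_\fm$ is an isomorphism, and consequently $m \notin \fm^n M$ inside $M$ itself.

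The quotient $Q := M/\fm^n M$ is a finitely generated module over the finite ring $\Lambda/\fm^n$, hence is itself a finite set, and the natural projection $\pi \colon M \twoheadrightarrow Q$ satisfies $\pi(m) \neq 0$; this is precisely the defining condition for residual finiteness. The only real obstacle in this outline is the input from step one: the finiteness of residue fields is an arithmetic statement that genuinely uses $\Lambda$ being finitely generated over $\Z$ and would fail for an arbitrary Noetherian domain (for example, $\Lambda = \mathbb{Q}[x]$ has residue fields of characteristic zero and is not residually finite as a module over itself). Everything else is a standard commutative algebra assembly, so I would expect the proof to occupy roughly half a page.
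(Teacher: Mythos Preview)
The paper does not actually prove this lemma: it is stated with a citation to an external source (``Theorem~1 in \cite{Nut_Berry}'') and no argument is given. So there is no paper proof to compare against.

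Your proposed argument is correct and is essentially the standard route to this fact. The two inputs you isolate are exactly the right ones: the arithmetic Nullstellensatz (so that $\Lambda/\fm$ is a finite field, whence $\Lambda/\fm^n$ is a finite ring) and Krull's intersection theorem for finitely generated modules over a Noetherian local ring. The passage from $m \notin \fm^n M_\fm$ back to $m \notin \fm^n M$ via the isomorphism $M/\fm^n M \cong M_\fm/\fm^n M_\fm$ is correct because $\fm^n$ is $\fm$-primary when $\fm$ is maximal, so $\Lambda/\fm^n$ is already local; you might want to say this explicitly in a full write-up, but it is routine. Your closing remark that the domain hypothesis is not really used---only that $\Lambda$ is a nonzero finitely generated $\Z$-algebra---is also accurate, and your $\mathbb{Q}[x]$ counterexample correctly identifies where the argument would break without the finite-generation-over-$\Z$ assumption.
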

Note that this contrasts sharply with the category of groups, where there are plenty of finitely generated constructions which are not residually finite: for instance, certain Baumslag--Solitar groups (see Section~\ref{Sec::BS}).

We shall write $\mathcal{C}_\Lambda(M)$ for the collection of isomorphism classes of finite $\Lambda$-epimorphic images of $M$. The profinite completion $\hM$ of $M$ is then identified canonically with the structure of a topological $\Lambda$-module given by the inverse limit
\[
\hM = \lim_{\substack{\longleftarrow \\ Q \in \mathcal{C}_\Lambda(M)}} Q
\]
where $\mathcal{C}_\Lambda(M)$ forms an inverse system under projections. Considering the commutative ring $\Lambda$ as a module over itself, the profinite completion $\hL$ is identified with the inverse limit of quotients of $\Lambda$ by finite-index ideals. Let $\iota \colon \Lambda \to \hL$ be the natural profinite completion map. Any profinite $\hL$-module $M$ is naturally endowed with the structure of a $\Lambda$-module with action via $\iota$. In this way, we obtain a forgetful functor
\[
F \colon \mathbf{PfMod}(\hL) \to \mathbf{PfMod}(\Lambda)
\] from the category of profinite $\hL$-modules to the category of profinite $\Lambda$-modules. Conversely, let $M$ be a profinite $\Lambda$-module which is topologically finitely generated. By \cite[Lemma 2]{Lewin}, the number of open submodules of a given index $\nu \in \mathbb{N}$ is finite, so the submodule
\[
 M(\nu) = \bigcap_{\substack{K \triangleleft M\\ [M:K] \leq \nu}}{K} \trianglelefteq M
\]
must be open in $M$ as well. The finite abelian groups $M / M(\nu)$ have finite endomorphism rings for all $\nu \in \mathbb{N}$, so $\operatorname{End}(M)$ acquires naturally the structure of a profinite ring given by the inverse limit
\[
\operatorname{End}(M) = \lim_{\substack{\longleftarrow \\ \nu \in \mathbb{N}}} \operatorname{End}\left(\frac{M}{M(\nu)}\right)
\]
over the partially ordered set $\mathbb{N}$. It follows that the action $\Lambda \to \operatorname{End}(M)$ factors through the profinite completion $\iota \colon \Lambda \to \hL$, yielding a commutative diagram
\[
\begin{tikzcd}
    \hL \arrow[r] & \operatorname{End}(M) \\
    \Lambda \arrow[u, "\iota"] \arrow[ur, bend right = 15] & 
\end{tikzcd}
\]
via the universal property of the profinite completion. In this way, one obtains a profinite $\hL$-module structure $M'$ on $M$ and the restriction of the $\hL$-action on $M'$ to the dense subring $\Lambda$ agrees with the $\Lambda$-action on $M$, giving $F(M') = M$. Hence, the functor $F$ is essentially surjective when restricted to the category of (topologically) finitely generated profinite modules. Similarly, one finds that this restricted functor is faithful and full, i.e. it yields a bijective correspondence between continuous $\hL$-morphisms $M \to N$ and continuous $\Lambda$-modrphisms $M \to N$ whenever $M,N$ are (topologically) finitely generated profinite $\hL$-modules. It follows that $F$ is in fact an equivalence of categories, which we record here for completeness.

\begin{lemma}
    Let $\Lambda$ be a commutative ring and $\hL$ its profinite completion. The forgetful functor
    \[
    F \colon \mathbf{PfMod_{fg}}(\hL) \to \mathbf{PfMod_{fg}}(\Lambda)
    \]
    is an equivalence between the category of (topologically) finitely generated profinite $\hL$-modules and the category of (topologically) finitely generated profinite $\Lambda$-modules.
\end{lemma}

For this reason, it is immaterial whether $\hM$ is considered as a module over the discrete ring $\Lambda$ or the profinite ring $\hL$; we choose the former in the interest of simplicity. The fundamental significance of profinite completions in the category of groups is that their isomorphism type encompasses precisely the information contained in the finite epimorphic images of a finitely generated group. Analogously, we observe the following foundational result which motivates the study of profinite rigidity for modules over a commutative ring.

\begin{theorem}\label{Thm::FiniteQuotients}
 Let $\Lambda$ be a finitely generated Noetherian ring and $M,N$ be finitely generated $\Lambda$-modules. Then $\hM \cong \hN$ if and only if $\mathcal{C}_\Lambda(M) = \mathcal{C}_\Lambda(N)$.
\end{theorem}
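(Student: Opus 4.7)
For the forward direction, the plan is to identify $\mathcal{C}_\Lambda(M)$ with the collection of isomorphism classes of finite continuous $\Lambda$-module quotients of $\hM$. One inclusion is the universal property of the profinite completion; the reverse uses density of $\iota(M)$ in $\hM$, since a continuous surjection $\hM \twoheadrightarrow Q$ onto a finite module $Q$ restricts along $\iota$ to a surjection $M \twoheadrightarrow Q$ (the image of $\iota(M)$ in $Q$ being both dense and finite). An isomorphism $\hM \cong \hN$ therefore transfers finite continuous quotients, yielding $\mathcal{C}_\Lambda(M) = \mathcal{C}_\Lambda(N)$.

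For the converse, I would deploy a Tychonoff/Mittag--Leffler compactness argument. Over the directed set $\mathcal{N}_\Lambda(M)$ of finite-index $\Lambda$-submodules of $M$, I would form the inverse system
\[
\mathcal{I}(K) = \left\{(L, \phi) : L \in \mathcal{N}_\Lambda(N),\ \phi \colon M/K \xrightarrow{\sim} N/L\right\},
\]
with transitions $\mathcal{I}(K') \to \mathcal{I}(K)$ for $K' \subseteq K$ sending $(L', \phi')$ to $(L, \phi)$, where $L \subseteq N$ is the preimage of $\phi'(K/K')$ under $N \twoheadrightarrow N/L'$ and $\phi$ is the induced isomorphism. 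Each $\mathcal{I}(K)$ is non-empty by hypothesis, and finite because $\operatorname{Aut}_\Lambda(M/K)$ is finite and there are only finitely many submodules of $N$ of index $|M/K|$ by \cite[Lemma 2]{Lewin}. Tychonoff then produces a compatible family $(L_K, \phi_K)_K$ which assembles into a continuous $\Lambda$-module isomorphism $\Phi \colon \hM \xrightarrow{\sim} \lim_K N/L_K$, with bijectivity passing to the limit from bijectivity of each $\phi_K$. Since each $L_K$ is a finite-index submodule of $N$, the target is naturally a continuous quotient of $\hN$, yielding a continuous surjection $\hN \twoheadrightarrow \hM$, and symmetrically a continuous surjection $\hM \twoheadrightarrow \hN$.

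The main obstacle lies in the final step: a priori, the image family $\{L_K\}$ need not be cofinal in $\mathcal{N}_\Lambda(N)$, so the constructed surjections need not be isomorphisms outright. I would close the argument by invoking a Hopfian property analogous to the group case---any continuous surjective endomorphism of a topologically finitely generated profinite $\widehat{\Lambda}$-module $P$ is an isomorphism---proved by passing to the finite quotients $P/P(n)$ with $P(n)$ the intersection of all open submodules of $P$ of index at most $n$ (open by \cite[Lemma 2]{Lewin}), and using the elementary fact that a surjective endomorphism of a finite module is bijective. Composing the two surjections yields a continuous surjective endomorphism of $\hM$, which is therefore an isomorphism; this forces both constituent surjections to be isomorphisms, delivering $\hM \cong \hN$.
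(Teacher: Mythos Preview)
Your proposal is correct and is essentially the module-theoretic translation of the classical group-theoretic argument of Dixon--Formanek--Poland--Ribes that the paper defers to; the paper itself omits the proof entirely, citing \cite[Main Theorem]{classic} and \cite[Theorem 3.2.3]{Gareth_Book} and declaring it \emph{mutatis mutandis}. Your compactness construction of a coherent family of isomorphisms, followed by the Hopfian step via the characteristic open submodules $P(n)$, is precisely how that classical proof runs, and your identification of the cofinality obstacle and its resolution via Hopficity is exactly the point where care is needed.
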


The proof follows \textit{mutatis mutandis} from the proof of \cite[Main Theorem]{classic} or \cite[Theorem 3.2.3]{Gareth_Book}; it is omitted here in the interest of brevity. We obtain the following corollary in analogy to \cite[Theorem 2]{classic}, supplanting the proof with an argument regarding the number of subobjects of a given index.
\begin{theorem}\label{Thm::FiniteQuotientsEpi}
 Let $\Lambda$ be a finitely generated commutative ring and $M,N$ be finitely generated residually finite $\Lambda$-modules with $\hM \cong \hN$. Any epimorphism $f \colon M \twoheadrightarrow N$ is an isomorphism.
\end{theorem}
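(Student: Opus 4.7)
The plan is to adapt the classical group-theoretic argument of \cite{classic} by showing that the number of submodules of a given bounded index is a $\Lambda$-profinite invariant. Let $K = \ker(f)$; the goal is to show $K = 0$ using the residual finiteness of $M$, so that $f$ must be injective and hence an isomorphism.

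First, for each $n \in \mathbb{N}$ I would define $s_n(M) = |\{L \leq M : [M:L] \leq n\}|$ and argue that this is a profinite invariant. Residual finiteness of $M$ gives the standard bijection $L \mapsto \overline{L}$ between finite-index $\Lambda$-submodules of $M$ and open $\Lambda$-submodules of $\hM$, with inverse $U \mapsto U \cap M$, and this bijection preserves the index. By \cite[Lemma 2]{Lewin} applied to $\hM$, which is topologically finitely generated, only finitely many open submodules of index at most $n$ exist, so $s_n(M) = s_n(\hM)$ is finite. The same argument applied to $N$, together with the hypothesis $\hM \cong \hN$, yields $s_n(M) = s_n(N)$ for every $n$.

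The second step leverages the epimorphism $f$. Pullback defines a map $L \mapsto f^{-1}(L)$ from the submodules of $N$ of index $\leq n$ to the submodules of $M$ of index $\leq n$; it is well-defined because $M / f^{-1}(L) \cong N/L$, and injective because $f(f^{-1}(L)) = L$ by surjectivity of $f$. By the previous step, source and target have the same finite cardinality, so this injection is a bijection. In particular, every finite-index submodule of $M$ is of the form $f^{-1}(L)$ and therefore contains $K = \ker(f)$. Taking the intersection over all finite-index submodules of $M$ and invoking residual finiteness gives $K = 0$, finishing the proof.

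I do not anticipate a substantive obstacle. The most delicate point is the index-preserving bijection between finite-index submodules of $M$ and open submodules of $\hM$, but this is a standard consequence of residual finiteness together with the denseness of $M$ in $\hM$; the rest of the argument is a clean counting pigeonhole. It is worth remarking that the Noetherian hypothesis is not used explicitly here beyond its implicit role in guaranteeing the profinite completion behaves well on finitely generated modules.
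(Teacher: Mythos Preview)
Your proof is correct and follows essentially the same strategy as the paper: both use Lewin's finiteness lemma together with a pigeonhole argument to force $\ker(f)=0$ via residual finiteness. The only cosmetic difference is that you count submodules of bounded index and apply pigeonhole to the pullback map $L\mapsto f^{-1}(L)$, whereas the paper compares the characteristic quotients $M/M(\nu)\cong N/N(\nu)$ (obtained via Theorem~\ref{Thm::FiniteQuotients}) and applies pigeonhole to the induced map $f_\nu$ between these finite sets.
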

\begin{proof}
    Note first that for any positive integer $\nu \in \mathbb{N}$, the number of index-$\nu$ submodules of the finitely generated $\Lambda$-module $M$ is finite: this is established for finitely generated rings in \cite[Lemma 2]{Lewin} and follows to the present case by pulling back submodules to ideals in the free $\Z$-algebra on $d_\Lambda(M) \cdot d_\Z(\Lambda)$ generators. As above, it follows that the submodules
    \[
    M(\nu) = \bigcap_{\substack{K \triangleleft M\\ [M:K] \leq \nu}}{K} \trianglelefteq M\qquad \text{and} \qquad N(\nu) = \bigcap_{\substack{K \triangleleft N\\ [N:K] \leq \nu}}{K} \trianglelefteq N
    \]
    have finite index in $M$ and $N$, respectively. Now let $f \colon M \twoheadrightarrow N$ be an epimorphism of $\Lambda$-modules and assume that $\hM \cong \hN$. Suppose for a contradiction that $f$ is not injective, so there exists a non-trivial element of the kernel $m \in \operatorname{Ker}(f) - 0$. As $M$ is residually finite, there must exist a submodule $M' \triangleleft M$ of $M$ of finite index $\nu \in \mathbb{N}$ such that $m \notin M' \supseteq M(\nu)$. Since the index of the epimorphic image of an index-$\nu$ submodule must divide $\nu$, we obtain a commutative diagram
    \[
    \begin{tikzcd}
        M \arrow[r, "f", two heads] \arrow[d, two heads, "\pi_\nu"] & N  \arrow[d, two heads, "\pi_\nu"]\\
        M/M(\nu) \arrow[r, "f_\nu", two heads] & N/N(\nu)
    \end{tikzcd}
    \] of $\Lambda$-morphisms. Then $\widetilde{x} = \pi_\nu(x) \neq 0$ has $f(\widetilde{x}) = 0$ by construction. However, Theorem~\ref{Thm::FiniteQuotients} yields $M/M(\nu) \cong N/N(\nu)$, as these are the unique maximal quotients of $M$ and $N$, respectively, wherein the intersection of submodules of index at most $\nu$ is trivial. Hence $f_\nu \colon M/M(\nu) \to N/N(\nu)$ is a surjective but not injective map between finite sets of equal cardinality, a violation of the pigeonhole principle.
\end{proof}

\subsection{Relationship with $\Ic$-adic completions}

In the second part of this section, we shall explore the relationship between profinite completions and \emph{adic completions} studied in commutative algebra.
These results will prove essential for the analysis in Sections \ref{Sec::PI} and \ref{Sec::BS}. However, due to the technical nature of these results, we recommend to skip this subsection on a first reading of the article. Recall that the localisation of a commutative ring $\Lambda$ at a prime ideal $\Ic$ is the commutative ring obtained from $\Lambda$ by adjoining inverses to all elements in $\Lambda - \Ic$. Given a $\Lambda$-module $M$, we may form the localised module $M_\Ic = M \otimes_\Lambda \LI$ accordingly; we invite the reader to \cite[Chapter II]{bourbaki} for details. Modules over a Noetherian ring $\Lambda$ localised at a maximal ideal $\Ic \trianglelefteq \Lambda$ are equipped naturally with a filtration given by
\[
M \supseteq \Ic M \supseteq \Ic^2 M \supseteq \ldots \supseteq \bigcap_{j \in \mathbb{N}} \Ic^j M = 0
\]
and thus embed into the Hausdorff completion $M_{\widehat{\fm}}$ with respect to this filtration, that is, the so-called $\fm$-adic completion. We invite the reader to \cite[Chapter III]{bourbaki} for details on this construction. Note that the $\Ic$-adic topology on $M_\Ic$ will not in general be a profinite topology: for instance, the quotient $\Lambda / \Ic$ could yield the characteristic zero field of rational numbers $\mathbb{Q}$ which has no finite quotients. However, in the case where $\Lambda$ is finitely generated, the field $\Lambda/\Ic$ must be finite \cite[Exercise 5.25]{AM} and the $\Ic$-adic completions $\Lambda\ad$ and $M\ad$ are indeed profinite. In fact, one obtains the following description of the profinite completion $\hL$ in terms of its adic completions.

\begin{lemma}[Lemma 2.1 in \cite{jaikin_genus}]\label{Lem::CRT}
    Let $\Lambda$ be a finitely generated commutative ring and $\hL$ its profinite completion. Write $\operatorname{Max}(\Lambda)$ to denote the set of maximal ideals of $\Ic$. There is an isomorphism of profinite rings
    \[
    \hL \cong \prod_{\Ic \in \operatorname{Max}(\Lambda)} \Lambda_{\widehat\Ic}
    \]
    where $\Lambda_{\widehat\Ic}$ denotes the $\Ic$-adic completion of $\Lambda$.
\end{lemma}

A particular consequence of Lemma~\ref{Lem::CRT} is the following description of torsion in the profinite completion of a finitely generated commutative ring $\Lambda$. Recall that a Noetherian local ring $\Lambda$ is \emph{regular local} if the minimal number of generators of its maximal ideal is equal to the dimension of $\Lambda$. A Noetherian ring $\Lambda$ is \emph{regular} if its localisation $\Lambda_\Ic$ is regular local for each maximal ideal $\Ic \trianglelefteq \Lambda$.

\begin{lemma}\label{Lem::hLtf}
    Let $\Lambda$ be a finitely generated regular commutative ring. The profinite completion $\hL$ is torsion-free as a $\Lambda$-module. 
\end{lemma}
\begin{proof}
    Note first that a finitely generated commutative ring is automatically Noetherian by the Hilbert Basis Theorem. By Lemma~\ref{Lem::NB}, the finitely generated commutative ring $\Lambda$ embeds into its profinite completion $\hL$ via the canonical map $\iota \colon \Lambda \to \hL$, and we identify $\Lambda$ with its image $\iota(\Lambda)$. The assertion that $\hL$ is torsion free as a $\Lambda$-module is then equivalent to the statement that no element of $\Lambda$ embeds into $\hL$ as a zero-divisor. Under the additional assumption that $\Lambda$ is regular, its localisation $\Lambda_\Ic$ at any maximal ideal $\Ic \trianglelefteq \Lambda$ is regular local, whence so is the $\Ic$-adic completion $\Lambda\ad$ \cite[Exercise 19.1]{eisenbud}. But regular local rings are in particular domains \cite[Corollary 10.14]{eisenbud}, so it follows via Lemma~\ref{Lem::CRT} that $\hL$ is a product of domains indexed over $\Ic \in \operatorname{Max}(\Lambda)$. Moreover, the composition
    \[
    \begin{tikzcd}
\Lambda \arrow[r, "\iota"] \arrow[rd, two heads, bend right = 25] & \hL \cong \prod_{\Ic \in \operatorname{Max}(\Lambda)} \Lambda_{\widehat\Ic} \arrow[d, "\operatorname{proj}_\Ic"] \\
& \Lambda\ad                         
\end{tikzcd}
    \]
    is precisely the $\Ic$-adic completion map, which is injective by \cite[Chapter III Proposition 3.3.5]{bourbaki}. It then follows that for any $\lambda \in \Lambda - 0$, the image $\iota(\lambda)$ is non-zero at each coordinate in the product of domains $\hL \cong \prod_{\Ic \in \operatorname{Max}(\Lambda)} \Lambda\ad$, and in particular, $\iota(\lambda)$ is not a divisor of zero in the profinite ring $\hL$. Hence the profinite completion $\hL$ is indeed torsion-free as a $\Lambda$-module.
\end{proof}

For the remainder of this section, we shall develop results allowing us to relate profinite and adic completions of $\Lambda$-modules to one another. We commence with the following elementary lemma, which is likely known to experts but which we include here for completeness.

\begin{lemma}\label{Lem::AdicsAndCompletions}
    Let $\Lambda$ be a finitely generated Noetherian domain and $M$ be a finitely generated $\Lambda$-module. For any maximal ideal $\Ic \trianglelefteq \Lambda$, there exist isomorphisms of $\Lambda$-modules
    \[
    M_{\widehat\Ic} \cong M \otimes_{\Lambda} \Lambda_{\widehat\Ic} \cong \hM \,\, \widehat{\otimes}_{\hL} \,\, \Lambda_{\widehat\Ic}
    \]
    where $M_{\widehat\Ic}$ denotes the $\Ic$-adic completion of $M$, while $- \widehat{\otimes}_{\hL} - $ denotes the completed tensor product over $\hL$ and $\hM$ denotes the $\Lambda$-profinite completion of $M$.
\end{lemma}

\begin{proof}
    The first isomorphism is given in \cite[pp. 203]{bourbaki}. For the second, note that $M_\Ic$ is defined as the inverse limit
    \begin{equation}\label{Eq::InvLim1}
    M_{\widehat\Ic} = \lim_{\leftarrow k \in \mathbb{N}} \left( \frac{M}{\Ic^kM}\right) 
    \end{equation}
    whereas $\hM \,\, \widehat{\otimes}_{\hL} \,\, \Lambda_{\widehat\Ic}$ is given by the inverse limit
    \begin{equation}\label{Eq::InvLim2}
    \hM \,\, \widehat{\otimes}_{\hL} \,\, \Lambda_{\widehat\Ic} = \lim_{\leftarrow k \in \mathbb{N}, N \trianglelefteq_f M} \left( \frac{M}{N} \otimes_\Lambda \frac{\Lambda}{\Ic^k}\right)
    \end{equation}
    where $k \in \mathbb{N}$ and $N\trianglelefteq_f M$ ranges through finite-index $\Lambda$-submodules of $M$. Note that for any $k \in \mathbb{N}$, there is an isomorphism
    \[
    \frac{M}{\Ic^k M} \cong \frac{M}{\Ic^k M} \otimes_\Lambda \frac{\Lambda}{\Ic^k}
    \]
    so that the inverse system in (\ref{Eq::InvLim1}) embeds into the inverse system in (\ref{Eq::InvLim2}). On the other hand, for any $k \in \mathbb{N}$ and $N \trianglelefteq_f M $, the surjection
    \[
    \frac{M}{\Ic^k M} \cong M \otimes_\Lambda \frac{\Lambda}{\Ic^k} \twoheadrightarrow \frac{M}{N} \otimes_\Lambda \frac{\Lambda}{\Ic^k}
    \]
    witnesses that the inverse system in (\ref{Eq::InvLim1}) has cofinal image embedded into the inverse system in (\ref{Eq::InvLim2}). Thus one obtains an isomorphism of inverse limits and the result. 
\end{proof}

A natural question is whether the localisation of the profinite completion of a $\Lambda$-module at the closure of a maximal ideal $\Ic$ returns the $\Ic$-adic completion. The following result answers this question to the affirmative.

\begin{proposition}\label{Prop::LocAndComp}
    Let $\Lambda$ be a finitely generated Noetherian domain, let $M$ be a $\Lambda$-module and write $\hM$ to denote its profinite completion. For any maximal ideal $\Ic \trianglelefteq \Lambda$, the localisation $\hM_{\overline{\Ic}}$ of the profinite $\hL$-module $\hM$ at the open maximal ideal $\overline{\Ic} \trianglelefteq \hL$ is isomorphic to the $\Ic$-adic completion $M\ad$ of $M$ as a $\hL$-module.
\end{proposition}
\begin{proof}
    It will suffice to show that the result holds in the case $M = \Lambda$; the general case $M \neq \Lambda$ then follows from
    \[
    \hM_{\overline{\Ic}} \cong \hM \widehat\otimes_{\hL} {\hL}_{\overline{\Ic}} \cong \hM \widehat\otimes_{\hL} \Lambda\ad \cong M\ad
    \]
    where the first isomorphism is given in \cite[Proposition 3.5]{AM} and the third isomorphism is given in Lemma~\ref{Lem::AdicsAndCompletions}. By Lemma~\ref{Lem::CRT}, there is an epimorphism of profinite $\hL$-algebras $p_\Ic \colon \hL \twoheadrightarrow \Lambda\ad$ given by projection. We claim that $p_\Ic(\hL - \overline\Ic) \subseteq \hL^\times$ holds. Indeed, if $\lambda \in \hL - \overline\Ic$ then the image of $p_\Ic(\lambda)$ in the finite field $\hL/ \overline{\Ic} \cong \Lambda/\Ic$ is a non-zero element and hence a unit. It follows that the polynomial $p_\Ic(\lambda) \cdot  x - 1$ in $\Lambda\ad[x]$ has a solution modulo $\overline{\Ic}$. By Hensel's Lemma \cite[Theorem 7.3]{eisenbud}, the polynomial $p_\Ic(\lambda) \cdot x -1$ must then also have a solution in $\Lambda\ad$, so in fact $p_\Ic(\lambda)$ is a unit, as claimed. The universal property of localisations then tells us that $p_\Ic$ factors through the localisation map $\hL \to \hL_{\overline{\Ic}}$, yielding a commutative diagram
    \[
    \begin{tikzcd}
        \hL \cong \prod_{\operatorname{Max}(\Lambda)} \Lambda\ad \arrow[r, two heads, "p_\Ic"] \arrow[d, "j"] & \Lambda\ad \\
         \hL_{\overline{\Ic}} \arrow[ur, two heads, "\vartheta"', bend right = 15] &
    \end{tikzcd}
    \]
    for some epimorphism of rings $\vartheta$. Moreover, the local ring $\hL_{\overline{\Ic}}$ is profinite: it is compact Hausdorff since it is the quotient of the compact space $\hL \times (\hL - \overline{\Ic})$ by a closed equivalence relation, and thus profinite via \cite[Proposition 5.1.2]{RZ}. Hence it will suffice to show that for any finite $\Lambda$-epimorphic quotient
    $
    \pi \colon \hL_{\overline{\Ic}} \twoheadrightarrow Q
    $
    there exists a finite $\Lambda$-epimorphism $\varpi \colon \hL \twoheadrightarrow \Lambda / \Ic^k$ for some $k \in \mathbb{N}$ satisfying $\varpi \circ \vartheta = \pi$. Indeed, if the $\Lambda$-module $Q$ is finite then in particular it is Artinian and its Jacobson radical is nilpotent \cite[pp. 89]{AM}. Hence there exists $k \in \mathbb{N}$ such that $\pi(\overline\Ic)^k = 0$, or equivalently,
    \[
    \overline{\Ic}^k \subseteq \operatorname{Ker}(\pi \circ j \colon \hL \twoheadrightarrow Q)
    \]
    holds. This means in particular that the epimorphism $\pi \circ j$ factors through the quotient $\hL \twoheadrightarrow \Lambda / \Ic^k$, which in turn factors through the $\Ic$-adic completion $\Lambda\ad$, yielding a morphism $\varpi \colon \Lambda\ad \to \Lambda / \Ic^k$ which satisfies $\varpi \circ \vartheta = \pi$. Thus $\vartheta$ must be an isomorphism of rings and by construction also an isomorphism of $\hL$-modules. This completes the proof.
\end{proof}

A particular consequence of Proposition~\ref{Prop::LocAndComp} is that for any two modules $M$ and $N$ over a finitely generated Noetherian domains $\Lambda$ with isomorphic profinite completions $\hM \cong \hN$, the localisations $M_\Ic$ and $N_\Ic$ at a maximal ideal $\Ic \trianglelefteq \Lambda$ also have isomorphic profinite completions $\widehat{M_\Ic} \cong \widehat{N_\Ic}$.
The following proposition makes no reference to the finite generation of the Noetherian domain $\Lambda$. In fact, it turns out that this implication holds in greater generality, even without the assumption of finite generation on the ring $\Lambda$.

\begin{proposition}\label{Prop::Localisation}
    Let $\Lambda$ be a Noetherian domain, $\Ic \trianglelefteq \Lambda$ be a prime ideal and $M,N$ be $\Lambda$-modules. An isomorphism of profinite $\Lambda$-modules $\hM \cong \hN$ induces an isomorphism of profinite $\Lambda_\Ic$-modules $\widehat{M_\Ic} \cong \widehat{N_\Ic}$.
\end{proposition}

\begin{proof}
    Let $\Lambda$ be a commutative ring, $\Ic \trianglelefteq \Lambda$ be a prime ideal and $M,N$ be $\Lambda$-modules. Write $\LI$ for the localisation of $\Lambda$ at $\Ic$ and $M_\Ic = M \otimes_{\Lambda} \LI$ for the localisation of $M$ at $\Ic$. We claim that a finite $\Lambda$-module $Q$ has finite localisation $Q_\Ic = Q \otimes_{\Lambda} \LI$. Indeed, given $m \in Q$, the annihilator $\Ann{m} \trianglelefteq \Lambda$ is the kernel of the map $\varphi_m \colon \Lambda \to M$ given by $\varphi_m \colon \lambda \mapsto \lambda \cdot m$, so in particular it forms a finite index ideal. On the other hand, the relation $m \otimes \lambda^{-1}_1 = m \otimes \lambda^{-1}_2$ in $Q_\Ic$ is equivalent to $\lambda_1 - \lambda_2 \in \Ann{m}$ for any $\lambda \in \Lambda - \Ic$. It follows that for each $m \in Q$, there are finitely many elements of the form $m \otimes \lambda^{-1}$ in $Q_\Ic$. But $Q$ itself is finite, and every element of the localisation $Q_\Ic$ is of the form $m \otimes \lambda^{-1}$ for some $m \in Q$ and $\lambda \in \Lambda - \Ic$. Hence the localised module $Q_\Ic = Q \otimes_\Lambda \LI$ is finite, as claimed. Consider now the collection
    \[
    \mathcal{L}_{\Ic}(M) = \{Q \otimes_\Lambda \LI : Q \in \mathcal{C}_\Lambda(M)\}
    \]
    which forms an inverse system of finite $\LI$-modules with partial order inherited from $\mathcal{C}_\Lambda(M)$. As localisation is exact, any epimorphism of $\Lambda$-modules $M \twoheadrightarrow Q$ induces an epimorphism of $\LI$-modules $M \otimes_\Lambda \LI \twoheadrightarrow Q \otimes_\Lambda \LI$. Thus we obtain an assignment
    \[
    \mathcal{L}_{\Ic}(M) \to \mathcal{C}_{\LI}(M_\Ic)
    \]
    given by the identity. We claim that the image of this assignment is a cofinal subsystem of $\mathcal{C}_{\LI}(M_\Ic)$, i.e. that for any epimorphism of $\LI$-modules $\pi \colon M_\Ic \to T$ with finite codomain, there exists an epimorphism $\varpi \colon M \to Q$ such that $\pi$ factors through the map
    \[
    \begin{tikzcd}
        M \otimes_\Lambda \LI \arrow[r, two heads, "\varpi \otimes 1"] & Q \otimes_\Lambda \LI
    \end{tikzcd}
    \]
    induced via localisation. Indeed, consider the composition
    \[
    M \xrightarrow{f} M \otimes_\Lambda \LI \xrightarrow{\pi} T
    \]
    where $f$ is the canonical map associated to the localisation. Let $K = \operatorname{Ker}(\pi f)$ be $\Lambda$-submodule of $M$ given by the kernel of this composition. The short exact sequence of the quotient $M/K$ induces a short exact sequence of localised modules, yielding a commutative diagram
    \[
    \begin{tikzcd}
        0 \arrow[r] & K \arrow[r] \arrow[d] & M \arrow[r,"\varpi"] \arrow[d, "f"] & M/K \arrow[r] \arrow[d] & 0 \\
        0 \arrow[r] & K \otimes_\Lambda \LI \arrow[r] & M \otimes_\Lambda \LI \arrow[r, "\varpi \otimes 1"] \arrow[d, "\pi", two heads] & M/K \otimes_\Lambda \LI \arrow[r] \arrow[ld, "\vartheta", two heads, dashed] & 0 \\
        && T&&
    \end{tikzcd}
    \]
    wherein the map $\vartheta$ is induced from $\pi$ via the universal property of the quotient: if $k \otimes \lambda^{-1} \in K \otimes_\Lambda \LI$ then
    \[
    \pi(k \otimes \lambda^{-1}) = \lambda^{-1} \cdot \pi(k \otimes 1) = \lambda^{-1} \cdot  \pi(f(k)) = 0
    \]
    so indeed $\pi$ factors through the surjection and $\vartheta$ exists. The quotient morphism of $\Lambda$-modules $\varpi \colon M \to M/K$ then induces a map the localised morphism of $\LI$-modules $\varpi \otimes 1$ satisfying $\pi = \vartheta \circ (\varpi \otimes 1)$. This proves the claim that $\mathcal{L}_\Ic(M)$ is cofinal in $\mathcal{C}_{\LI}(M_\Ic)$. The analogous argument for the $\Lambda$-module $N$ shows that $\mathcal{L}_\Ic(N)$ is cofinal in $\mathcal{C}_{\LI}(N_\Ic)$ as well. Hence there exists an isomorphism of $\Lambda$-modules
    \[
    \widehat{M_\Ic} \cong  \lim_{Q \in \mathcal{C}_\Lambda(M)} (Q \otimes_\Lambda \LI) \cong \lim_{Q \in \mathcal{C}_\Lambda(N)} (Q \otimes_\Lambda \LI) \cong \widehat{N_\Ic}
    \]
    where the central isomorphism holds as $\hM \cong \hN \Leftrightarrow \mathcal{C}_\Lambda(M) \cong \mathcal{C}_\Lambda(N)$. As the action of $\Lambda$ on $\widehat{M_\Ic}$ and $\widehat{N_\Ic}$ factors through the action by $\LI$, we obtain an isomorphism of profinite $\LI$-modules $\widehat{M_\Ic} \cong \widehat{N_\Ic}$, as postulated.
\end{proof}

\section{Profinite Invariants}\label{Sec::PI}
In this section, we prove Proposition~\ref{Prop::General}. We record the first point---our first profinite invariant of $\Lambda$-modules---in the following result, which holds without any further assumptions on the commutative ring $\Lambda$. 

\begin{lemma}\label{Lem::Ann}
    Let $\Lambda$ be a commutative ring and $M,N$ be residually finite $\Lambda$-modules. If $\hM \cong \hN$ then $\Ann{M} = \Ann{N}$.
\end{lemma}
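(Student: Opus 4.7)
The plan is to show that the annihilator of a residually finite $\Lambda$-module $M$ is determined by its profinite completion, via the identity $\Ann{M} = \Ann{\hM}$, after which the result follows immediately from the fact that any isomorphism $\hM \cong \hN$ is a $\Lambda$-module isomorphism and hence preserves annihilators.

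First I would establish the inclusion $\Ann{M} \subseteq \Ann{\hM}$. If $\lambda \in \Lambda$ annihilates $M$, then $\lambda$ annihilates every element of every finite quotient $Q \in \mathcal{C}_\Lambda(M)$, since these quotients carry the $\Lambda$-action induced from $M$. Passing to the inverse limit, $\lambda$ acts as zero on $\hM$, giving the claimed inclusion.

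For the reverse inclusion $\Ann{\hM} \subseteq \Ann{M}$, I would use the assumption of residual finiteness. The canonical map $\iota \colon M \to \hM$ is a $\Lambda$-equivariant injection, so for any $\lambda \in \Ann{\hM}$ and any $m \in M$ we have $\iota(\lambda m) = \lambda \iota(m) = 0$, and injectivity of $\iota$ forces $\lambda m = 0$. Hence $\lambda \in \Ann{M}$. Combining both inclusions yields $\Ann{M} = \Ann{\hM}$, and symmetrically $\Ann{N} = \Ann{\hN}$. Any isomorphism of $\Lambda$-modules $\hM \cong \hN$ identifies these two annihilator ideals in $\Lambda$, so $\Ann{M} = \Ann{N}$ as required.

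There is no real obstacle here; the argument is essentially a consequence of the defining property of residual finiteness combined with the functoriality of the $\Lambda$-action on the inverse limit. The only minor point deserving explicit mention is the $\Lambda$-equivariance of the isomorphism $\hM \cong \hN$, which is part of the data of a profinite $\Lambda$-module isomorphism and hence justifies transporting the annihilator across it. Notably, the argument requires no Noetherian or finite-generation hypothesis on $\Lambda$ or on $M,N$, reflecting the generality claimed in the statement.
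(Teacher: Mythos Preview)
Your proof is correct and runs on the same two ideas as the paper's argument---that an annihilating element passes to all finite quotients, and that residual finiteness lets you pull the annihilation back---but you package them differently and, in one respect, more cleanly. The paper argues directly: $\lambda \in \Ann{M}$ kills every finite quotient of $M$, then invokes Theorem~\ref{Thm::FiniteQuotients} to identify $\mathcal{C}_\Lambda(M)$ with $\mathcal{C}_\Lambda(N)$, and finally uses residual finiteness of $N$ to deduce $\lambda \in \Ann{N}$. You instead prove the intermediate identity $\Ann{M} = \Ann{\hM}$ and transport annihilators across the $\Lambda$-isomorphism $\hM \cong \hN$. Your route has the advantage that it never appeals to Theorem~\ref{Thm::FiniteQuotients}, whose hypotheses (finitely generated ring, finitely generated modules) are strictly stronger than those of the lemma; so your argument genuinely matches the stated generality, whereas the paper's proof, read literally, does not.
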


\begin{proof}
    Let $\Lambda$ be a commutative ring and $M,N$ be $\Lambda$-modules with an isomorphism of $\Lambda$-profinite completions $\hM \cong \hN$. Choose any $\lambda \in \Ann{M}$. Then $\lambda$ acts as the zero endomorphism on $M$, and consequently also on all finite $\Lambda$-epimorphic images thereof. By Theorem~\ref{Thm::FiniteQuotients}, it must also act as zero on all finite $\Lambda$-epimorphic images of $N$. If there was an $n \in N$ with $\lambda n \neq 0$ then we could pass to a finite $\Lambda$-quotient $\pi \colon N \twoheadrightarrow Q$ whereby $\lambda\cdot n$ is mapped to a non-trivial element, since $N$ is residually finite by assumption. But then $\lambda \cdot \pi(n) = \pi(\lambda \cdot n) \neq 0$, a contradiction. Hence $\lambda \cdot N = 0$ and $\lambda \in \Ann{N}$. We conclude that $\Ann{M} \subseteq \Ann{N}$ and note that the opposite inclusion holds symmetrically.
\end{proof}
Now, isomorphism classes of cyclic $\Lambda$-modules (that is, modules which are generated over $\Lambda$ by a single element) are determined entirely by their annihilators, so we obtain the following corollary to Lemma~\ref{Lem::Ann}.
\begin{corollary}\label{Cor::Cyclic}
    Let $\Lambda$ be a commutative ring and $M,N$ be residually finite cyclic $\Lambda$-modules. If $\hM \cong \hN$ then $M \cong N$.
\end{corollary}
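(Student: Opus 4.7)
The plan is to reduce the claim immediately to Lemma~\ref{Lem::Ann} by invoking the structure theorem for cyclic modules. Concretely, since $M$ is cyclic, I would pick a generator $m \in M$ and consider the $\Lambda$-module homomorphism $\varphi_m \colon \Lambda \to M$ defined by $\varphi_m(\lambda) = \lambda \cdot m$. Cyclicity ensures that $\varphi_m$ is surjective, while the kernel of $\varphi_m$ is by definition $\Ann{M}$. The first isomorphism theorem for $\Lambda$-modules then yields $M \cong \Lambda / \Ann{M}$, and by the same argument $N \cong \Lambda / \Ann{N}$.

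With this reduction in place, the corollary becomes a one-line application of Lemma~\ref{Lem::Ann}: the hypotheses of that lemma are met (both $M$ and $N$ are residually finite and $\hM \cong \hN$), so $\Ann{M} = \Ann{N}$, and hence
\[
M \cong \Lambda / \Ann{M} = \Lambda / \Ann{N} \cong N.
\]

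There is really no substantive obstacle here; the proof is entirely formal once Lemma~\ref{Lem::Ann} is in hand. The only point worth noting is that the argument does not require any additional hypotheses on $\Lambda$, as the identification of a cyclic module with $\Lambda / \Ann{M}$ is valid over any commutative ring. In particular, no Noetherian or finite-generation assumption on $\Lambda$ enters the proof.
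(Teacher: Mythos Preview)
Your proof is correct and matches the paper's approach exactly: the paper simply remarks that isomorphism classes of cyclic $\Lambda$-modules are determined entirely by their annihilators and states the corollary without further argument, and you have merely spelled this out. One tiny quibble: the kernel of $\varphi_m$ is literally $\Ann{m}$, the annihilator of the chosen generator, but since $M$ is cyclic over a commutative ring this coincides with $\Ann{M}$, so the identification $M \cong \Lambda/\Ann{M}$ goes through as stated.
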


Another natural question one may ask is whether given a $\Lambda$-module $M$, the closure of the annihilator $\Ann[\Lambda]{M}$ in $\hL$ is equal to the annihilator $\Ann[\hL]{\hM}$ of the profinite completion $\hM$. In general, this turns out to be a hard question. However, in the case where the profinite completion $\hM$ is cyclic as a profinite $\hL$-module (note that this assumption is weaker than the assumption that $M$ itself is cyclic), we obtain the following affirmative answer. It will be essential in the proof of Theorem~\ref{Thm::Groups} in Section~\ref{Sec::BS}.

\begin{proposition}\label{Prop::AnnAndComp}
    Let $\Lambda$ be a finitely generated Noetherian domain and $M$ be a finitely generated $\Lambda$-module whose profinite completion $\hM$ forms a cyclic $\hL$-module. Then
    \[
    \overline{\Ann{M}} = \Ann[\hL]{\hM}
    \]
    where $\overline{\Ann{M}}$ denotes the closure of the ideal $\Ann{M}$ in $\hL$ and $\Ann[\hL]{\hM}$ denotes the annihilator of the profinite module $\hM$ in the profinite ring $\hL$.
\end{proposition}

\begin{proof}
    Let $A = \Ann{M} \trianglelefteq \Lambda$ be the annihilator of the $\Lambda$-module $M$ identified with its image in $\hL$ and let $\overline{A}$ be its closure in $\hL$. On the other hand, write $\widetilde{A} = \Ann[\hL]{\hM}$ to denote the annihilator of the profinite $\hL$-module $\hM$. As $A \subseteq \widetilde{A}$ and $\widetilde{A}$ is closed, the inclusion $\overline{A} \subseteq \widetilde{A}$ must hold. Conversely, suppose for a contradiction that there exists $a \in \widetilde{A} - \overline{A}$. Then there exists a finite $\Lambda$-epimorphic image $\pi \colon \hL \to \Theta$ with $\pi(a) \notin \pi(A)$. Furthermore, by \cite[pp. 88]{bourbaki},  there exists a maximal ideal $\widetilde{\Ic} \trianglelefteq \Theta$ such that
    \begin{equation}\label{Eq::LocalAnnImage}
    \pi(a)_{\widetilde{\Ic}} \notin \pi(A)_{\widetilde{\Ic}}
    \end{equation}
    holds in the localised ring $\Theta_{\widetilde{\Ic}}$. Consider hence the maximal ideal $\Ic = \pi^{-1}(\widetilde{\Ic}) \cap \Lambda$ of $\Lambda$, which fits into the commutative diagram
    \[
    \begin{tikzcd}
        \Lambda \arrow[r, "\pi \at{\Lambda}"] \arrow[d, "(\cdot)_{\Ic}"] & \Theta \arrow[d, "(\cdot)_{\widetilde{\Ic}}"] \\
        \Lambda_\Ic \arrow[r, "\pi_{\Ic}"]                               & \Theta_{\widetilde{\Ic}}                    
\end{tikzcd}
    \] 
    as per \cite[pp. 69]{bourbaki}. As $\Theta$ is finite, it is Artinian and so its Jacobson radical is nilpotent \cite[pp. 89]{AM}, and in particular there exists an integer $k \in \mathbb{N}$ such that $\widetilde{\Ic}^k = 0$, id est, $\Ic^k \subseteq \operatorname{Ker}(\pi)$. Note that the $\Lambda_\Ic$-module $M_\Ic = M \otimes_\Lambda \Lambda_\Ic$ is cyclic: by assumption, the profinite $\hL$-module $\hM$ is cyclic, hence so is its quotient $\frac{M}{\Ic M} \cong \frac{M_\Ic}{\Ic M_\Ic}$ and by Nakayama's Lemma it follows that $M_\Ic$ must also be cyclic. Now $\Lambda$ is dense in $\hL$ and $\overline{\Ic^k}$ is open, so we may produce $\lambda \in \Lambda$ satisfying $\lambda - a \in \overline{\Ic^k}$. We then find
    \[
    \lambda_{\Ic} \cdot \frac{M_\Ic}{\Ic^k M_\Ic} = \lambda \cdot \frac{M}{\Ic^k M} = a \cdot \frac{M}{\Ic^k M} = 0
    \]
    since $a \in \widetilde{A}$ and the action of $\hL$ on the finite quotient module $\frac{M}{\Ic^k M}$ factors through the $\Lambda$-profinite completion $\hM$ of $M$. On the other hand,
    \[
    \frac{M_\Ic}{\Ic^k M_\Ic} \cong \frac{\Lambda_\Ic}{\Ic^k + \Ann[\Lambda_\Ic]{M_\Ic}} \cong \frac{\Lambda_\Ic}{\Ic^k + A_\Ic}
    \]
    where the first isomorphism exists as $M_\Ic$ is a cyclic $\Lambda_\Ic$-module and the second isomorphism derives from \cite[pp. 68]{bourbaki}. The conjunction of these equations yields \[\lambda_\Ic \in \operatorname{Ann}_{\Lambda_\Ic} \left( \frac{\Lambda_\Ic}{\Ic^k + A_\Ic}\right) = \Ic^k + A_\Ic
    \]
    wherefore
    \[
    \pi(a)_{\widetilde{\Ic}} = \pi(\lambda)_{\widetilde{\Ic}} = \pi_\Ic(\lambda_\Ic) \in \pi_\Ic(A_\Ic) = \pi(A)_{\widetilde{\Ic}}
    \]
    which contradicts (\ref{Eq::LocalAnnImage}). Thus $\widetilde{A} = \overline{A}$ and the result is established.
\end{proof}

We proceed to demonstrate the main result of the present section, Proposition~\ref{Prop::General}, wherein we establish three key profinite invariants for modules over a finitely generated Noetherian domain.

\propA*
\begin{proof}
    That finitely generated modules over a finitely generated Noetherian domain are residually finite is given in Lemma~\ref{Lem::NB}. We proceed to show that the items (1)-(3) are $\Lambda$-profinite invariants. Item \textbf{(1)} is given in Lemma~\ref{Lem::Ann}.
    
    For \textbf{(2)}, choose any maximal ideal $\fm \trianglelefteq \Lambda$ and  write $\mathbb{F}$ to denote the field $\mathbb{F} \cong \Lambda / \fm$, which must be finite due to the assumption that $\Lambda$ is finitely generated as a $\Z$-algebra (see \cite[Exercise 5.25]{AM}). Let $\mu_{\fm}(-) \colon \Lambda\text{-mod} \to \mathbb{N}$ denote the minimal number of generators of the localisation $(-) \otimes_\Lambda \LI$ of a module $(-)$ over the local ring $\Lambda_\fm$. We need to show that for any finitely generated $\Lambda$-module $N$ satisfying $\hM \cong \hN$, the equation $\mu_\fm(M) = \mu_\fm(N)$ holds. We claim first that the equation
    \begin{equation}\label{Eq::SupDim}
        \mu_\fm(M_\fm) = d_\fm(M_\fm) := \sup\{\rho \in \mathbb{N} \mid \mathbb{F}^\rho \in \mathcal{C}_{\Lambda_\fm}(M_\fm)\}
    \end{equation}
    holds. Indeed, Nakayama's Lemma \cite[Section II.3.2 Proposition 5]{bourbaki} yields
    \begin{equation}\label{Eq::Nakayama}
        \mu_\fm(M_\fm) = \dim_{\Lambda/\fm}(M_\fm/\fm M_\fm)
    \end{equation}
    so in particular $\mu_\fm(M_\fm) \leq d_\fm(M_\fm)$ holds. For the opposite inequality, suppose that there is an epimorphism of $\Lambda_\fm$-modules
    $
    \varpi \colon M_\fm \twoheadrightarrow \mathbb{F}^\rho
    $
    for some $\rho \in \mathbb{N}$. As the tensor product $- \otimes_{\Lambda_\fm} \Lambda_\fm / \fm \Lambda_\fm$ is exact, we obtain an epimorphism of $\Lambda_\fm$-modules
    \[
     M_\fm/\fm M_\fm \cong M_\fm \otimes_\Lambda \Lambda/\fm \twoheadrightarrow \mathbb{F}^\rho \otimes_\Lambda \Lambda/\fm \cong \mathbb{F}^\rho
    \]
    so in particular $\rho \leq \dim_{\Lambda/\fm}(M_\fm/\fm M_\fm)$ via rank-nullity. Another application of (\ref{Eq::Nakayama}) then yields the claim and (\ref{Eq::SupDim}) is established. The analogous argument for the $\Lambda_\fm$-module $N_\fm$ yields $\mu_\fm(N_\fm) = d_\fm(N_\fm)$. Now $\hM \cong \hN$ implies $\widehat{M_\fm} \cong \widehat{N_\fm}$ and hence $\mathcal{C}_{\Lambda_\fm}(M_\fm) = \mathcal{C}_{\Lambda_\fm}(N_\fm)$ via Lemma~\ref{Prop::Localisation} and Theorem~\ref{Thm::FiniteQuotients}. But $d_\fm(-)$ is parametrised entirely by $C_\fm(-)$, so we obtain
    \[
    \mu_\fm(M_\fm) = d_\fm(M_\fm) = d_\fm(N_\fm) = \mu_\fm(N_\fm)
    \]
    as postulated. Thus item (2) is established.
    
    Finally, we shall demonstrate that item \textbf{(3)} is a $\Lambda$-profinite invariant. Suppose that the finitely generated $\Lambda$-module $M$ is projective of rank $\nu \in \mathbb{N}$ and $N$ is a finitely generated $\Lambda$-module satisfying $\hM \cong \hN$. We aim to show that $N$ must also be projective of rank $\nu$. By \cite[Section II.5.3 Theorem 2]{bourbaki}, it will suffice to show that for any maximal ideal $\Ic \trianglelefteq \Lambda$, the localisation $N_\fm$ at $\fm$ is a free $\Lambda_\fm$-module of rank $\nu$. Choose any maximal ideal $\Ic \trianglelefteq \Lambda$ and consider the local ring $\Lambda_\fm$. By Lemma~\ref{Prop::Localisation}, there is an isomorphism of $\Lambda_\fm$-modules $\widehat{M_\fm} \cong \widehat{N_\fm}$. But \cite[Section II.5.3 Theorem 2]{bourbaki}, tells us that the $\Lambda_\fm$-module $M_\fm$ is must be free of rank $\nu$. As we have shown in item (2) above, it follows that $\mu_\fm(N_\fm) = \mu_\fm(M_\fm) = \nu$, so in particular, there exists an epimorphism of $\Lambda_\fm$-modules
    \[
    M_\fm \cong \Lambda_\fm^\nu \twoheadrightarrow N_\fm
    \]
    given by the final term in a free resolution of $N_\fm$. Now Theorem~\ref{Thm::FiniteQuotientsEpi} yields $N_\fm \cong M_\fm$, as required. Hence $N$ must be projective of rank $\nu$ and the proof is complete.
\end{proof}
Under the additional assumption that $\Lambda$ is homologically taut, there is only one isomorphism class of projective $\Lambda$-modules of a given rank. In that case, Proposition~\ref{Prop::General}.3 tells us that any other module in the profinite genus of a free module must also belong to that isomorphism class. Thus we obtain the following first profinite rigidity result for modules over homologically taut Noetherian domains.

\thmA*

\section{Dedekind Domains}\label{Sec::PIDs}
In this section, we focus on the specific case where $\Lambda$ is a Dedekind domain, that is, an integrally closed Noetherian domain of dimension one. In that case, we can bound the profinite genus $\Gs[\Lambda]{M}$ of a $\Lambda$-module $M$ in terms of the ideal class group $\operatorname{Cl}(\Lambda)$, see Theorem~\ref{Thm::Dedekind} proven below.  We invite the reader to \cite[Section II.5.7]{bourbaki} for the definition and basic properties of the ideal class group $\operatorname{Cl}(\Lambda)$ of a Dedekind domain $\Lambda$. In the case where $\Lambda$ is a principal ideal domain (PID), the ideal class group is trivial, so we find that \emph{all} finitely generated $\Lambda$-modules are profinitely rigid in the absolute sense: see Corollary~\ref{Cor::PID} below.

Indeed, suppose that $\Lambda$ is a finitely generated Dedekind domain and let $M$ be a finitely generated $\Lambda$-module. Write $T_\Lambda(M)$ to denote the torsion submodule of $M$, that is,\[
T_\Lambda(M) = \{m \in M \mid \exists \lambda \in \Lambda - 0, \lambda \cdot m = 0\}
\] with $T_\Lambda(M) \trianglelefteq M$. By the structure theorem of finitely generated modules over Dedekind domains \cite[Chapter VII Section 4.10]{bourbaki}, the $\Lambda$-module $M$ decomposes as a direct sum
\begin{equation}\label{Eq::Struct}
M = T_\Lambda(M) \oplus P_M
\end{equation}
where $P_M$ is some projective $\Lambda$-module. The torsion module $T_\Lambda(M)$ itself decomposes as a direct sum of cyclic modules
\begin{equation}\label{Eq::TorsionModule}
T_\Lambda(M) = \bigoplus_{i=1}^l \frac{\Lambda}{\fp_i^{k_i}}
\end{equation}
for some tuple of prime ideals $\fp_i \trianglelefteq \Lambda$ and positive integers $k_i \in \mathbb{N}$ indexed by $i = 1, \ldots, l$. Now prime ideals in a Dedekind domain are maximal and maximal ideals in a finitely generated $\Z$-algebra must have finite index \cite[Exercise 5.25]{AM}. Thus each summand in (\ref{Eq::TorsionModule}) must in fact be a finite $\Lambda$-module. We have proven:
\begin{lemma}\label{Lem::TorsionFinite}
    Finitely generated torsion modules over a finitely generated Dedekind domain must be finite. 
\end{lemma}
In particular, any finitely generated torsion $\Lambda$-module is canonically isomorphic to its profinite completion. Returning to the finitely generated $\Lambda$-supermodule $M$, the injectivity of the canonical $\Lambda$-profinite completion map $\iota_M \colon M \to \hM$ now implies that we may identify the torsion module $T_\Lambda(M)$ with its isomorphic image $\iota_M(T_\Lambda(M))$ in $\hM$. It follows via \cite[Theorem 6.1.7]{Gareth_Book} that
\begin{equation}\label{Eq::StructProf}
\hM = T_\Lambda(M) \oplus \widehat{P_M}
\end{equation}
is a decomposition of profinite $\Lambda$-modules. We claim that the image $T_\Lambda(M)$ of the torsion module of $M$ agrees with the torsion submodule $T_\Lambda(\hM)$ of the profinite $\Lambda$-module $\hM$. The inclusion $T_\Lambda(M) \subseteq T_\Lambda(\hM)$ follows directly from the $\Lambda$-invariance of $\iota_M$. Conversely, suppose that $m \in \hM$ is a $\Lambda$-torsion element of the profinite $\Lambda$-module $\hM$ and consider its image $\overline{m}$ under the projection $\hM \twoheadrightarrow \widehat{P_M}$, which forms a $\Lambda$-torsion element of $\widehat{P_M}$. As $P_M$ is a finitely generated projective $\Lambda$-module, it embeds as a direct summand into a finitely generated free module $\Lambda^\mu$, so the profinite completion $\widehat{P_M}$ embeds into the profinite completion $\hL^\mu$ as a direct summand as well \cite[Theorem 6.1.7]{Gareth_Book}. The latter is torsion-free as a $\Lambda$-module via the conjunction of Lemma~\ref{Lem::hLtf} and the fact that Dedekind domains are regular \cite[Chapter VII Theorem 2.2.1]{bourbaki}, so the $\Lambda$-module $\widehat{P_M}$ must also be free of $\Lambda$-torsion. It follows that $\overline{m} = 0$ and $m \in \operatorname{Ker}(\hM \twoheadrightarrow \widehat{P_M}) = T_\Lambda(M)$ must have been an element of $T_\Lambda(M)$ to begin with. We have proven the following.
\begin{lemma}
    Let $\Lambda$ be a finitely generated Dedekind domain and let $M$ be a finitely generated $\Lambda$-module. The canonical $\Lambda$-profinite completion morphism $\iota_M \colon M \to \hM$ restricts to an isomorphism
    \[
    T_\Lambda (M) \xrightarrow{\sim} T_\Lambda (\hM)
    \]
    of torsion $\Lambda$-submodules.
\end{lemma}
We are now in a position to prove the characterisation of profinite genera for modules over a Dedekind domain in terms of its ideal classes.

\thmC*

\begin{proof}
    Let $\Lambda$ be a finitely generated Dedekind domain of characteristic zero and let $\operatorname{Cl}(\Lambda)$ denote its ideal class group. Let $M$ be a finitely generated $\Lambda$-module and write $d(M) = \dim_{\operatorname{Frac}(\Lambda)}\left(M \otimes_\Lambda \operatorname{Frac(M)}\right)$ for the dimension of its localisation at the prime ideal $(0)$ as a vector space over the fraction field. We shall construct an injective assignment of sets
    \begin{equation}\label{Eq::koten}
        \mathcal{Y}_{\Lambda,M} \colon \Gs{M} \hookrightarrow \operatorname{Cl}(\Lambda)
    \end{equation}
    as follows. Choose any $N \in \Gs{M}$, so that $N$ is a finitely generated $\Lambda$-module admitting an isomorphism of $\Lambda$-profinite completions $f \colon \hM \to \hN$. By the structure theorem of finitely generated modules over a Dedekind domain \cite[Chapter VII Section 4.10]{bourbaki}, the $\Lambda$-modules $N$ and $M$ decompose as a direct sums
    \[
        M = T_\Lambda(M) \oplus P_M \qquad \text{and} \qquad N = T_\Lambda(N) \oplus P_N
    \]
    where $T_\Lambda(M)$ and $T_\Lambda(N)$ are the respective torsion submodules, while  $P_M$ and $P_N$ are projective modules and $P_M$ has projective rank $d(M)$. By Lemma~\ref{Lem::TorsionFinite}, the canonical $\Lambda$-profinite completion morphisms
    \[
    \iota_M \colon M \to \hM \qquad \text{and} \qquad \iota_N \colon N \to \hN
    \] restrict to isomorphisms
    \[
    T_\Lambda (M) \xrightarrow{\sim} T_\Lambda (\hM) \qquad \text{and} \qquad T_\Lambda (N) \xrightarrow{\sim} T_\Lambda(\hN)
    \]
    of the respective torsion submodules. Moreover, the $\Lambda$-invariant isomorphism of profinite modules $f \colon \hM \xrightarrow{\sim} \hN$ must satisfy $f(T_\Lambda(\hM)) = T_\Lambda(\hN)$ owing to the fact that torsion submodules are characteristic. Hence we obtain a commutative diagram
    \[
    \begin{tikzcd}
0 \arrow[r] & T_\Lambda(M) \arrow[r] \arrow[d, "\sim"']    & M \arrow[r] \arrow[d, "\iota_M"]      & P_M \arrow[r] \arrow[d, "\iota_{P_M}"]     & 0 \\
0 \arrow[r] & T_\Lambda(\hM) \arrow[r] \arrow[d, "\sim"'] & \hM \arrow[r] \arrow[d, "f"]         & \widehat{P_M} \arrow[r] \arrow[d, "\sim"] & 0 \\
0 \arrow[r] & T_\Lambda(\hN) \arrow[r]                    & \hN \arrow[r]                         & \widehat{P_N} \arrow[r]                    & 0 \\
0 \arrow[r] & T_\Lambda(N) \arrow[r] \arrow[u, "\sim"]    & N \arrow[r] \arrow[u, "\iota_N"'] & P_N \arrow[r] \arrow[u, "\iota_{P_N}"']     & 0
\end{tikzcd}
    \]
    where the top right and bottom right vertical maps agree with the profinite completion map by \cite[Theorem 6.1.7]{Gareth_Book}. Hence we obtain isomorphisms
    \[
    T_\Lambda(M) \cong T_\Lambda(N) \qquad \text{and} \qquad \widehat{P_M} \cong \widehat{P_N}
    \]
    as $\Lambda$-modules. Now $P_M$ is projective of rank $d(M)$, so $P_N$ must also be projective of rank $d(M)$ by Proposition~\ref{Prop::General}. It follows via \cite[Chapter VII Proposition 4.10.24]{bourbaki} that $P_N \cong \Lambda^{d(M) - 1} \oplus \Ic_N$ for some invertible ideal $\Ic_N \trianglelefteq \Lambda$ which determines unniquely the class $[\Ic_N] \in \operatorname{Cl}(\Lambda)$. Thus we obtain an isomorphism
    \begin{equation}\label{Eq::Dec}
        N \cong T_\Lambda(M) \oplus \Lambda^{d(M) - 1} \oplus \Ic_N
    \end{equation}
    noting further that the isomorphism type of the invertible ideal $\Ic_N$ must determine the isomorphism type of the $\Lambda$-module $N$ uniquely. Hence the assignment
    \[
    \mathcal{Y}_{\Lambda,M}(N) = [\Ic_N]
    \]
    yields a well-defined function as in (\ref{Eq::koten}). Moreover, given two $\Lambda$-modules $N_1, N_2 \in \Gs[\Lambda]{N}$ with $\mathcal{Y}_{\Lambda,M}(N_1) = \mathcal{Y}_{\Lambda,M}(N_2)$, the associated ideals $\Ic_{N_1}$ and $\Ic_{N_2}$ belong to the same class and must hence be isomorphic. The isomorphim (\ref{Eq::Dec}) then yields an isomorphism of $\Lambda$-modules $N_1 \cong N_2$. Thus $\mathcal{Y}_{\Lambda,M}$ is an injective assignment and the proof is complete.
\end{proof}
If $\Lambda = \mathcal{O}_K$ is the ring of integers in an algebraic number field $K$, then $\Lambda$ is a Dedekind domain \cite[Proposition I.8.1]{ANT} and its ideal class group is finite \cite[Theorem I.6.3]{ANT}. Hence we obtain the following corollary to Theorem~\ref{Thm::Dedekind}.
\begin{corollary}\label{Cor::NumberField}
Let $\mathcal{O}_K$ be the ring of integers in an algebraic number field $K$ and $M$ be a finitely generated $\mathcal{O}_K$-module. Then $\Gs[\mathcal{O}_K]{M}$ is finite.
\end{corollary}

On the other hand, principal ideal domains are precisely Dedekind domains with trivial ideal class group. Thus we obtain the following absolute profinite rigidity result for arbitrary finitely generated modules over a principal ideal domain.

\corB*

\section{Applications to the profinite rigidity of groups}\label{Sec::Gps}
In this section, we shall apply the results developed in Sections \ref{Sec::PI} and \ref{Sec::PIDs} to the profinite rigidity of groups. The salient result is Theorem~\ref{Thm::Groups}---the absolute profinite rigidity of solvable Baumslag--Solitar groups---proven in subsection Section~\ref{Sec::BS} below. We refer the reader also to parallel work of the author \cite{FreeMetab} where the results of the present article have been used to prove the absolute profinite rigidity of free metabelian groups, as well as ongoing work with further classes of solvable groups.

The idea is outlined as follows. Suppose that $\Gamma$ is a finitely generated group with (possibly infinitely generated) abelian normal subgroup $N \trianglelefteq \Gamma$ and quotient $\Upsilon = \Gamma / N$. The subgroup $N$ acquires canonically the structure of a module over the group algebra $\Z[\Upsilon]$ whereby $\Upsilon$ acts by conjugation. We shall refer to this module structure on $N$ as the \emph{$\Gamma$-conjugacy structure}. One may then relate the question of profinite rigidity of $\Gamma$ to the profinite rigidity of $N$ as a module over the ring $\Lambda = \Z[\Upsilon]$. An essential assumption will be that $\Upsilon$ \emph{has separable cohomology}, meaning that the profinite completion map $\iota \colon \Upsilon \to \widehat{\Upsilon}$ induces an isomorphism on cohomology
\[
\iota^\ast \colon H^\ast(\widehat \Upsilon,M) \xrightarrow{\sim} H^*(\Upsilon, M)
\]
for any finite $\Z[\Upsilon]$-module $M$. This concept---also known as $\Upsilon$ being \emph{cohomologically good}---was first introduced by Serre in \cite[Section 2]{Serre} and has recently gained considerable new attention: we invite the reader to \cite[Chapter 7]{Gareth_Book} or \cite{WillAndI} for details. In the presence of separable cohomology, we obtain the following lemma.
\begin{lemma}\label{Lem::ConjugacyModules}
    Let $\Gamma$ be a finitely generated residually finite group, $M \trianglelefteq \Gamma$ an abelian normal subgroup and $\Upsilon = \Gamma / M$ the associated quotient. Consider $M$ as a module over $\Lambda = \Z[\Upsilon]$ with $\Gamma$-conjugacy structure. If $\Upsilon$ has separable cohomology in dimension two, then the $\Lambda$-profinite completion $\hM$ is $\Lambda$-isomorphic to the profinite $\Lambda$-module $\overline{M}$ given by the closed subgroup $\overline{M} \leq \hG$ with $\hG$-conjugacy structure.
\end{lemma}
\begin{proof}
    The $\Lambda$-module $\hM$ is given by the inverse limit of the inverse system $\mathcal{I}_\Lambda(M)$ of $\Lambda$-epimorphic images of the $\Lambda$-module $M$. On the other hand, the $\Lambda$-module $\overline{M}$ is given by the inverse system $\mathcal{I}_\Gamma(M)$ of the images $\pi(M)$ of the subgroup $M \leq \Gamma$ under finite quotients $\pi \colon \Gamma \twoheadrightarrow Q$, whose $\Lambda$-module structures extend $\Z$-linearly from the conjugation action induced via $\Upsilon \twoheadrightarrow Q/\pi(M)$. Thus it will suffice to show that these inverse systems are isomorphic. Indeed, consider the natural embedding
    \[
    F \colon \mathcal{I}_\Gamma(M) \to \mathcal{I}_\Lambda(M)
    \]
   which realises the restriction of a finite quotient $\pi \colon \Gamma \twoheadrightarrow Q$ to $M$ as a finite quotient of abelian groups $F(\pi) = \pi \at{M} \colon M \twoheadrightarrow Q_M \leq Q$ that is $\Lambda$-equivariant as the induced $\Lambda$-action on $Q_M$ factors through conjugation in $Q$. To see that $F$ has cofinal image, choose any epimorphism of $\Lambda$-modules $p \colon M \twoheadrightarrow Q_M$ with finite codomain and write $K = \operatorname{Ker}(p)$ for its kernel, which forms a normal subgroup of $M$. The action of $\Lambda$ on $M$ is then given by conjugation in $\Gamma$, so the assumption that $p$ is a morphism of $\Lambda$-modules implies that $K$ is also a normal subgroup in $\Gamma$. Hence we obtain an extension
    \[
    1 \to Q_M \to \Gamma / K \to \Upsilon \to 1
    \]
    of the finitely generated group $\Upsilon$ by the finite group $Q_M$. By assumption, $\Upsilon$ has separable cohomology in dimension two, so \cite[Theorem 7.2.6]{Gareth_Book} yields a finite-index normal subgroup $\widetilde{U} \trianglelefteq \Gamma/K$ with $\widetilde{U} \cap Q_M = 1$. Now the preimage $U$ of $\widetilde{U}$ in $\Gamma$ is a finite-index normal subgroup of $\Gamma$ and one obtains an epimorphism of groups $\pi \colon \Gamma \to \Gamma / U$ fitting into the commutative diagram
    \[
    \begin{tikzcd}
        1 \arrow[r] & M \arrow[r] \arrow[d, "p", two heads] & \Gamma \arrow[r, "\phi"] \arrow[d, two heads, "\pi"] & \Upsilon \arrow[r] \arrow[d, two heads] & 1 \\
        1 \arrow[r] & Q_M \arrow[r]& \Gamma / U \arrow[r] & \frac{\Gamma}{U\cdot M} \cong \frac{\Upsilon}{\phi(U)} \arrow[r] & 1
    \end{tikzcd}
    \]
    whose rows are short exact sequences. The epimorphism of groups $\pi$ then restricts to $p$ on $M$. We have shown that the embedding of inverse systems $F$ has cofinal image; applying the inverse limit functor, we obtain an isomorphism of profinite $\Lambda$-modules \[\hM = \lim_{\longleftarrow}(\mathcal{I}_\Lambda(M)) \cong \lim_{\longleftarrow}(\mathcal{I}_\Gamma(M)) = \overline{M}\] as postulated.
\end{proof}

\subsection{Solvable Baumslag--Solitar Groups} \label{Sec::BS}
A class of groups of particular significance in combinatorial and geometric group theory is the class of \emph{Baumslag--Solitar groups}. These groups, parametrised by pairs of non-zero integers $(n,m) \in \Z^2$, are given by the two-generator one-relator presentation
\[
\Gamma_{m,n} = \operatorname{BS}(m,n) = \langle a,t \mid t^{-1}a^nt = a^m \rangle
\]
where, after possibly composing with $\Gamma$-automorphisms, we may assume without loss of generality that $|n| \geq m \geq 0$ holds.  Equivalently, Baumslag--Solitar groups are precisely HNN-extensions of the infinite cyclic group along a non-trivial edge group (whereby one recovers $n,m$ as the images of the edge group generator under the prescribed inclusion). We refer the reader to \cite{BSS1, BSS3} for a survey of the properties and significance of this class of groups. Most notably, Baumslag and Solitar exhibited in 1962 the group $\operatorname{BS}(2,3)$ as the first example of a finitely generated one-relator group which is non-Hopfian, and therefore, not residually finite \cite{BS}. At present, it is known that $\operatorname{BS}(n,m)$ is residually finite if and only if either $|m| = |n|$ or else $m = 1$ holds; it is subgroup separable (LERF) if and only if $|m| = |n|$ holds \cite[Theorem 1 and Theorem 13]{BSS1}.

Owing to their peculiar residual properties, Baumslag--Solitar groups have also been of particular interest within profinite rigidity. For instance, the cohomological separability of Baumslag--Solitar groups was characterised completely in \cite{WillAndI}, while \cite{Wang} has established relative profinite rigidity of residually finite Baumslag--Solitar with respect to one another. From the methods in the present paper we obtain the \emph{absolute} profinite rigidity of $\operatorname{BS}(1,n)$ for any integer $n \in \mathbb{Z}$, i.e. that these groups can be distinguished by their profinite completions among \emph{all finitely generated residually finite groups}, as postulated in Theorem~\ref{Thm::Groups}. This subclass coincides precisely with the class of solvable Baumslag--Solitar groups, and constitutes the first instance of absolute profinite rigidity among non-abelian one-relator groups, as well as the first instance among groups which are not subgroup separable (LERF). We dedicate the remainder of this section to the proof of Theorem~\ref{Thm::Groups}, which utilises the full strength of the theory developed in the present article.

\thmB*

\begin{proof}
    Let $\Gamma = \operatorname{BS}(1,n)$ be the solvable Baumslag--Solitar group associated to an integer $n \in \mathbb{Z}$, that is, the group given by the presentation
    \begin{equation}\label{Eq::PresBS}
    \Gamma = \langle a,t \mid t^{-1}at = a^n \rangle \cong \Z\left[\tfrac{1}{n}\right] \rtimes_n \Z
    \end{equation}
    which is residually finite by \cite[Theorem 1]{BSS1}. We may assume that $n \neq 0,1$, since the cases $n = 0$ and $n = 1$ yield abelian groups already known to be profinitely rigid in the absolute sense \cite[Corollary 3.2.12]{Gareth_Book}. Let $\Delta$ be a finitely generated residually finite group admitting an isomorphism of profinite completions $\phi \colon \hG \xrightarrow{\sim} \hD$. By \cite[Proposition 3.2.10]{Gareth_Book}, we must have
    \begin{equation}\label{Eq::BSAbel}
        \Delta^\mathrm{ab} \cong \Gamma^\mathrm{ab} \cong \frac{\Z}{n-1} \times \Z
    \end{equation}
    so that there are epimorphisms of groups $\eta_\Gamma \colon \Gamma \twoheadrightarrow \Z$ and $\eta_\Delta \colon \Delta \twoheadrightarrow \Z$ that are unique up to multiplication by $-1$. Let $M = \operatorname{Ker}(\eta_\Gamma) \trianglelefteq \Gamma$ and $N = \operatorname{Ker}(\eta_\Delta) \trianglelefteq \Delta$ be the kernels of these maps, which acquire canonically the structure of a module over the group algebra $\Omega = \Z[\Z] \cong \Z[x^\pm]$, where $x$ acts as conjugation by a preimage of the generator of $\Z$ under $\eta_\Gamma$ or $\eta_\Delta$, respectively. Observe that $M = \langle \langle a \rangle \rangle \trianglelefteq \Gamma$ takes the form
    \begin{equation}\label{Eq::PresM}
    M \cong \frac{\Omega}{(x-n)}
    \end{equation}
    as derived from the presentation (\ref{Eq::PresBS}). Now
    \begin{equation}\label{Eq::BSAbel}
        \hD^\mathrm{ab} \cong \hG^\mathrm{ab} \cong \frac{\Z}{n-1} \times \pZ
    \end{equation}
    so the induced maps $\widehat{\eta_\Gamma} \colon \hG \twoheadrightarrow \pZ$ and $\widehat{\eta_\Delta} \colon \hD \twoheadrightarrow \pZ$ are also unique, up to multiplication by a unit in $\pZ^\times$. Moreover, the conjugacy $\widehat{\Omega}$-module structure on the closed subgroups $\overline{M} = \operatorname{Ker}(\widehat{\eta_\Gamma})\trianglelefteq \hG$ and $\overline{N} = \operatorname{Ker}(\widehat{\eta_\Delta})\trianglelefteq \hD$ agree with the $\Omega$-profinite completions $\hM$ and $\hN$ of $M$ and $N$, respectively, as per Lemma~\ref{Lem::ConjugacyModules}. Hence we obtain a commutative diagram
    \begin{equation}\label{Eq::OGCD}       
    \begin{tikzcd}
1 \arrow[r] & \hM \arrow[d, "f"] \arrow[r] & \hG \arrow[d, "\phi"] \arrow[r] & \pZ \arrow[d, "\cdot \kappa"] \arrow[r] & 1 \\
1 \arrow[r] & \hN \arrow[r]                & \hD \arrow[r]                   & \pZ \arrow[r]                           & 1
\end{tikzcd}
    \end{equation}
    where $\kappa \in \pZ^\times$ is a unit and $f$ is an isomorphism of profinite abelian groups. Fruthermore, multiplication by $\kappa$ extends $\pZ$-linearly to an automorphism of profinite rings $\alpha \colon \widehat{\Omega} \xrightarrow{\sim} \widehat{\Omega}$ which satisfies the twisted equation
    \begin{equation}\label{Eq::Twist}
    f(\omega \cdot m) = \alpha(\omega) \cdot f(m)
    \end{equation}
    for any $\omega \in \widehat{\Omega}$ and $m \in \hM$. We shall write $\Lambda_M$ for the ring $\Lambda_M = \frac{\Omega}{\operatorname{Ann}_{\Omega}(M)} \cong \frac{\Omega}{(x-n)}$ so that the $\Omega$-module structure on $M$ is equivalent to a rank-one free $\Lambda_M$-module structure. Similarly, we shall write $A$ for the annihilator $A = \Ann[\Omega]{N}$ in $\Omega$ and $\Lambda_N$ for the quotient ring $\Lambda_N = \Omega / A$, so that the $\Omega$-module structure on $N$ is equivalent to a $\Lambda_N$-module structure with trivial annihilator. We proceed to narrow down the possible structure of $A$ and $N$ in three steps.

    \textbf{Step I.} \textit{The annihilator $A = \Ann[\Omega]{N}$ is a principal ideal in $\Omega$, and for each maximal ideal $\Ic \trianglelefteq \Omega$, the localisation $A_\Ic$ is a non-maximal prime ideal in $\Omega_\Ic$}.

    We show first that the local statement implies the global. The ideal $A$ in the domain $\Omega$ is principal if and only if it is free of rank one. On the other hand, owing to a celebrated result of Quillen and Suslin \cite[Corollary 7.4]{suslin}, the localised polynomial ring $\Omega$ is homologically taut, so being free of rank one is in turn equivalent to being projective of rank one as a module over $\Omega$. Finally, rank-one projectivity of the ideal $A$ is equivalent to the property that the localisation $A_\Ic \trianglelefteq \Omega_\Ic$ is principal for each maximal ideal $\Ic \trianglelefteq \Omega$, as per \cite[Chapter II Theorem 5.6.4]{bourbaki}. That the latter holds whenever $A_\Ic$ is non-maximal prime follows from the fact that all prime ideals in $\Omega$ are either maximal or principal \cite[Example II.1.H]{RedBook}. Hence, the local statement does indeed imply the global, and it will suffice to show that for any maximal ideal $\Ic \trianglelefteq \Omega$, the localisation $A_\Ic$ is a non-maximal prime ideal in $\Omega_\Ic$.

    Choose a maximal ideal $\Ic \trianglelefteq \Omega$ and consider the diagram induced by the $\Ic$-adic completion functor
    \[
    \begin{tikzcd}
1 \arrow[r] & A_\Ic \arrow[d] \arrow[r]   & \Omega_\Ic \arrow[d] \arrow[r]   & \Omega_\Ic/A_\Ic \arrow[r] \arrow[d]               & 1 \\
1 \arrow[r] & A\ad \arrow[r] & \Omega\ad \arrow[r] & \Omega\ad/A\ad \arrow[r] & 1
\end{tikzcd}
    \]
    which commutes as $\Ic$-adic completion is exact \cite[Chapter III Theorem 3.4.3]{bourbaki}. As the ring $\Omega_\Ic/A_\Ic$ is local, the map $\Omega_\Ic/A_\Ic \to \Omega\ad/A\ad$ must be injective \cite[Chapter III Proposition 3.3.5]{bourbaki}, so in fact it will suffice to prove that the $\Ic$-adic completion $\Omega\ad/A\ad$ is a domain, or equivalently, that the $\Ic$-adic completion $A\ad$ of the ideal $A_\Ic$ is a non-maximal prime ideal in $\Omega\ad$. Indeed, the automorphism of profinite rings $\alpha \colon \widehat \Omega \xrightarrow{\sim} \widehat \Omega$ descends to an isomorphism of $\Ic$-adic completions
    \[
    \begin{tikzcd}
\widehat\Omega \arrow[r, "\alpha"] \arrow[d, two heads]    & \widehat\Omega \arrow[d, two heads] \\
\Omega_{\widehat{\Jc}} \arrow[r, "\alpha\ad"] & \Omega\ad        
\end{tikzcd}
    \]
    where $\Jc \trianglelefteq \Omega$ is the maximal ideal $\Jc = \Omega \cap \alpha^{-1}(\Ic)$, cf. \cite[Lemma 2.1]{jaikin_genus}. Consider hence the map
    \[
    F_\Ic \colon M_{\widehat{\Jc}} \cong \hM \otimes_{\widehat{\Omega}} \Omega_{\widehat{\Jc}} \xrightarrow{f \otimes \alpha_{\Ic}} \hN \otimes_{\widehat{\Omega}} \Omega\ad \cong N\ad
    \]
    where the first and third isomorphisms are given by Lemma~\ref{Prop::Localisation}. To see that $F_\Ic$ is a well-defined isomorphism of profinite abelian groups, note that for any 
    \begin{align*}
    F_\Ic(\lambda m \otimes \omega) &= \alpha(\lambda)f(m) \otimes \alpha\ad(\omega) \\
    &= f(m) \otimes \alpha(\lambda) \alpha\ad(\omega)\\
     &= F_\Ic(m \otimes \lambda\omega) 
    \end{align*}
    holds whenever $\lambda,\omega \in \Omega_{\widehat{\Jc}}$ and $m \in M_{\widehat{\Jc}}$. For the same reason, $F_\Ic$ forms a twisted isomorphism of profinite $\widehat{\Omega}$-modules in the sense of (\ref{Eq::Twist}). It follows that
    \[
    A\ad = \Ann[\Omega\ad]{N\ad} = \alpha_{\widehat\Ic}\left(\Ann[\Omega_{\widehat\Jc}]{M_{\widehat\Jc}}\right) = \alpha_{\widehat\Ic}\left((x-n)_{\widehat\Jc}\right)
    \]
     where the first equality holds by \cite[Chapter III Corollary 3.4.1]{bourbaki} and the second derives from the twist (\ref{Eq::Twist}). As the ideal $(x-n) \trianglelefteq \Omega$ is non-maximal, neither is $(x-n)_{\widehat\Jc}$, see \cite[Chapter III Proposition 3.4.8]{bourbaki}. To see that it is prime, note that the ring $\frac{\Omega_\Jc}{(x-n)_\Jc}$ is regular local (it is the localisation of a principal ideal domain), so its completion is regular local as well \cite[Exercise 19.1]{eisenbud} and hence a domain \cite[Corollary 10.14]{eisenbud}. We conclude that $A_{\widehat \Ic} = \alpha_{\widehat \Ic} \left( (x-n)_{\widehat{\Jc}}\right)$ is a non-maximal prime ideal and the proof of Step I is complete.
     
     \textbf{Step II.} \textit{The annihilator $A = \Ann[\Omega]{N}$ is of the form $A = (ax - b)$ for some coprime integers $a,b \in \Z$}.

     By Step I, the annihilator $A = \Ann[\Omega]{N}$ is a principal ideal, i.e. $A = (g(x))$ for some Laurent polynomial $g(x) \in \Omega = \Z[x^\pm]$. After possibly multiplying by a unit in $\Omega^\times$, we may assume that $g$ has non-negative degree and non-zero constant term. Identifying $A$ with its image in $\widehat{\Omega}$ and writing $\overline{A}$ for its closure in the profinite topology on $\widehat{\Omega}$, we find that 
     \[
     \overline{(g(x))} = \overline{\Ann[\Omega]{N}} = \Ann[\widehat\Omega]{\hN} = \alpha\left(\Ann[\widehat{\Omega}]{\hM}\right) = \alpha \left(\overline{(x-n)}\right)
     \]
     holds in $\widehat \Omega$, where the second and fourth equalities are given by Proposition~\ref{Prop::AnnAndComp} and the third derives from the twist (\ref{Eq::Twist}). It follows that the isomorphism of profinite rings $\alpha \colon \widehat{\Omega} \xrightarrow{\sim} \widehat{\Omega}$ descends to an isomorphism
     \[
     \begin{tikzcd}
\widehat\Omega \arrow[r, "\alpha"] \arrow[d, two heads]                                   & \widehat\Omega \arrow[d, two heads]                          \\
\widehat{\Lambda_M} \cong \widehat{\Omega}/\overline{(x-n)} \arrow[r, "\widetilde\alpha"] & \widehat{\Omega}/\overline{(g(x))} \cong \widehat{\Lambda_N}        
\end{tikzcd}
     \]
as profinite rings. Choose a prime number $p$ which divides neither $n$ nor any of the coefficients of $g(x)$. The principal ideal $(p)$ is characteristic in the profinite ring $\widehat{\Lambda_M}$ (see e.g. \cite[Chapter III Proposition 2.12.16]{bourbaki}) and is hence preserved by the isomorphism of profinite rings $\widetilde{\alpha} \colon \widehat{\Lambda_M} \xrightarrow{\sim} \widehat{\Lambda_N}$. Thus we find that the profinite completion
\[
\widehat{\frac{\Omega}{(p,g(x))}} \cong \widehat{\frac{\Lambda_N}{(p)}} \cong \widehat{\frac{\Lambda_M}{(p)}} \cong \widehat{\frac{\Omega}{(p,x-n)}} \cong \mathbb{F}_p
\]
is the finite field of order $p$, and in fact there must be an isomorphism of the discrete ring $\frac{\Omega}{(p,g(x))} \cong \mathbb{F}_p$. It follows that the reduction of $g(x)$ modulo $p$ is linear and irreducible. But $p$ does not divide any of the coefficients of $g(x)$, so the polynomial $g(x)$ must itself be linear and irreducible in $\Omega$. In other words, we have shown that $g(x) = ax - b$ for two non-zero integers $a,b \in \Z$. Moreover, the greatest common divisor $d := \gcd(a,b)$ must satisfy $d = 1$, for otherwise there would exist a prime number $q$ dividing $d$ and the localisation of $A = (ax - b)$ at the maximal ideal $(q,\frac{a}{d}x-\frac{b}{d}) \trianglelefteq \Omega$ would not be a prime ideal, contradicting Step I. Thus $A = (ax - b)$ for two coprime integers $a,b \in \mathbb{Z}$, and Step II is established.

\textbf{Step III.} \textit{The group $\Delta$ is of the form $\Delta \cong \Z\left[\tfrac{1}{ab}\right] \rtimes_{b/a} \Z$ for two coprime integers $a,b \in \Z$. Moreover, a prime number $p$ divides $n$ if and only if $p$ divides $ab$.}

Indeed, Step II allows us to realise the ring $\Lambda_N$ as the domain
\[
\Lambda_N = \frac{\Omega}{\Ann[\Omega]{N}} = \frac{\Omega}{(ax-b)} \cong \Z\left[\tfrac{1}{ab}\right]
\]
where $a,b \in \mathbb{Z}$ are coprime integers. Consider the isomorphism of profinite abelian groups given by the composition
\[
F \colon \widehat{\Lambda_N} \xrightarrow{\widetilde\alpha^{-1}} \widehat{\Lambda_M} \cong \hM \xrightarrow{f} \hN
\]
where $\widetilde\alpha$ is the isomorphism of profinite rings constructed in Step II, the central isomorphism of $\Lambda_M$-modules $\widehat{\Lambda_M} \cong \hM$ derives from the construction of $M$ as a rank-one free $\Lambda_M$-module, and $f$ is the twisted isomorphism of $\Omega$-modules described in (\ref{Eq::Twist}). Observe that

\[
F(\lambda \cdot \mu) = f(\widetilde\alpha^{-1}(\lambda) \cdot \widetilde\alpha^{-1}(\mu)) = \widetilde\alpha(\widetilde\alpha^{-1}(\lambda)) \cdot f(\widetilde\alpha^{-1}(\mu)) = \lambda \cdot F(\mu)
\]
holds for all $\lambda, \mu \in \widehat{\Lambda_N}$. Thus $F$ is in fact an isomorphism of profinite $\Lambda_N$-modules. As $\Lambda_N \cong \Z[\tfrac{1}{ab}]$ is the localisation of a principal ideal domain, it is a principal ideal domain itself \cite[Proposition 3.11]{AM}. We may then use either Theorem~\ref{Thm::epic} or Corollary~\ref{Cor::PID} to deduce that there exists an isomorphism of discrete $\Lambda_N$-modules $N \cong \Lambda_N$. Hence we obtain a short exact sequence of groups
\[
1 \to N \cong \Z\left[\tfrac{1}{ab}\right] \xrightarrow{\,\,\,\,\,\,} \Delta \xrightarrow{\eta_\Delta} \Z \to 1
\]
which splits (as $\Z$ is a free group), yielding
\begin{equation}\label{Eq::StructureOfDelta}
\Delta = N \rtimes_x \Z \cong \Z\left[\tfrac{1}{ab}\right] \rtimes_{b/a} \Z
\end{equation}
as postulated. Moreover, we find that
\[
\prod_{p \nmid n} \Z_p \rtimes_{n} \pZ \cong \hG \cong \hD \cong \prod_{p \nmid ab} \Z_p \rtimes_{b/a} \pZ 
\]
as per \cite[Example 4.1]{WillAndI}, wherefrom it follows that a prime number $p$ divides $n$ if and only if $p$ divides $ab$. This proves Step III.

\textbf{Conclusion.} \textit{There is an isomorphism of discrete groups $\Gamma \cong \Delta$.}

Indeed, consider the semidirect product decompositions of $\Gamma$ and $\Delta$ given in (\ref{Eq::PresBS}) and (\ref{Eq::StructureOfDelta}), respectively. Writing the kernels $M$ and $N$ additively and the $\Z$-factors $\eta_\Gamma(\Gamma) = \langle t_\Gamma \rangle \cong \Z$ and $\eta_\Delta(\Delta) = \langle t_\Delta \rangle \cong \Z$ multiplicatively, we find that
\[
(0,t_\Gamma) \ast_{\hG} (1,1) = (t_\Gamma \cdot 1, t_\Gamma) = (n,t_\Gamma)
\]
holds in $\hG$. Passing through the isomorphism of profinite groups $\phi \colon \hG \to \hD$, we infer that
\begin{align*}    
(n \cdot f(1),t^\kappa_\Delta) &= \phi(n,t_\Gamma) \\
&= \phi\left( (0,t_\Gamma) \ast_{\hG} (1,1)\right) \\
&= \phi(0,t_\Gamma) \ast_{\hD} \phi(1,1) \\
&= (0,t_\Delta^\kappa) \ast_{\hD} (f(1),1) \\
&= \left(\left( \tfrac{b}{a}\right)^\kappa \cdot f(1), t_\Delta^\kappa \right)
\end{align*}
holds in $\hD$, where $\kappa \in \pZ^\times$ and $f \colon \hM \to \hN$ are the maps given in (\ref{Eq::OGCD}). But $f(1)$ cannot be a zero-divisor in the ring $\widehat{\Lambda_N} \cong \prod_{p \nmid ab} \Z_p$, so we must in fact have
\begin{equation}\label{Eq::nab}
    n = (b/a)^\kappa
\end{equation}
in $\widehat{\Lambda_N}$. Now, for any prime $p$ which does not divide the product $ab$, there exists an epimorphism of groups of units 
\[
\zeta_p \colon \widehat{\Lambda_N}^\times \cong \prod_{p \nmid ab} \Z_p^\times \twoheadrightarrow \Z_p^\times \twoheadrightarrow \mathbb{F}_p^\times
\]
given by projections. The equation (\ref{Eq::nab}) then implies that there is an inclusion of subgroups $\langle \zeta_p(b/a)\rangle \subseteq \langle \zeta_p(n)\rangle $ in the group of units $\mathbb{F}_p^\times$. As $\kappa \in \mathbb{Z}^\times$ is a unit itself, we also obtain the opposite inclusion and hence the equality $\langle \zeta_p(b/a)\rangle = \langle \zeta_p(n)\rangle$. In particular, it follows that the rational numbers $n$ and $b/a$ have equal orders in the multiplicative group $\mathbb{F}_p^\times$ for almost all primes $p$. We now invoke the solution \cite[Theorem 1]{corrales_schoof} to a question of Erd\"os, which states that this can happen only if either $n = (b/a)^\pm$ or else $n = \pm 1$ and $b/a = \pm 1$ holds. But the integers $a$ and $b$ are coprime, so it must be the case that $a = \pm 1$ or $b = \pm 1$. After possibly composing with the automorphism of $\Delta$ induced by $t_\Delta \mapsto t_\Delta^{-1}$ or the automorphism of $\Delta$ induced by $1_N \mapsto -1_N$, we obtain $a = 1$ and $b = n$. Combining this data with the description of $\Delta$ established in Step III, we find that
\[
\Delta \cong \Z\left[\tfrac{1}{ab}\right] \rtimes_{b/a} \Z \cong \Z\left[\tfrac{1}{n}\right] \rtimes_{n} \Z \cong \Gamma
\]
is an isomorphism of discrete groups. Thus $\Gamma$ is profinitely rigid in the absolute sense and the proof is complete.

\end{proof}

\printbibliography

\end{document}